\newtheorem{theorem}{Theorem}
\newtheorem{lemma}{Lemma}
\newtheorem{corollary}{Corollary}
\newtheorem{proposition}{Proposition}
\newtheorem{conjecture}{Conjecture}
\newtheorem{definition}{Definition}
\newtheorem{remark}{Remark}
\newcommand{\satop}[2]{\stackrel{\scriptstyle{#1}}{\scriptstyle{#2}}}
\newcommand{\bsw}{\boldsymbol{w}}
\newcommand{\bsx}{\boldsymbol{x}}
\newcommand{\bsz}{\boldsymbol{z}}
\newcommand{\bsy}{\boldsymbol{y}}
\newcommand{\bszero}{\boldsymbol{0}}
\newcommand{\NN}{\mathbb{N}}
\newcommand{\RR}{\mathbb{R}}
\newcommand{\HH}{\mathcal{H}}
\DeclareMathOperator{\dd}{\mathrm{d}}
\DeclareMathOperator{\dist}{dist}
\newcommand{\DEF}{{:=}}
\newcommand{\FED}{{=:}}
\newcommand{\PT}[1]{\mathbf{#1}}
\begin{document}

\title{\scshape Quasi-Monte Carlo rules for numerical integration over the unit sphere $\mathbb{S}^2$\footnote{MSC: 65D30, 65D32}}

\author{Johann S. Brauchart\thanks{\noindent The author is supported by an {\sc APART}-Fellowship of the Austrian Academy of Sciences.} and Josef Dick\thanks{The author is supported by an Australian Research Council Queen Elizabeth 2 Fellowship.} \\ School of Mathematics and Statistics,\\
University of New South Wales, \\
Sydney, NSW, 2052, Australia \\ \texttt{j.brauchart@unsw.edu.au}, \\ \texttt{josef.dick@unsw.edu.au}}

\date{\today}
\maketitle

\begin{abstract}
We study numerical integration on the unit sphere $\mathbb{S}^2 \subseteq
\mathbb{R}^3$ using equal weight quadrature rules, where the weights
are such that constant functions are integrated exactly. 

The quadrature points are constructed by lifting a $(0,m,2)$-net given in the 
unit square $[0,1]^2$ to the sphere $\mathbb{S}^2$ by means of an area preserving map. 
A similar approach has previously been suggested by Cui and Freeden [SIAM J. Sci. Comput. 18 (1997), no. 2].

%
We prove three results. The first one is that the construction is
(almost) optimal with respect to discrepancies based on spherical
rectangles. Further we prove that the point set is asymptotically
uniformly distributed on $\mathbb{S}^2$. And finally, we prove an
upper bound on the spherical cap $L_2$-discrepancy of order
$N^{-1/2} (\log N)^{1/2}$ (where $N$ denotes the number of points). This improves upon the bound on the spherical cap $L_2$-discrepancy of the construction by Lubotzky, Phillips and Sarnak [Comm. Pure Appl. Math. 39 (1986)] by a factor of $\sqrt{\log N}$.

Numerical results suggest that the $(0,m,2)$-nets lifted to the sphere
$\mathbb{S}^2$ have spherical cap $L_2$-discrepancy converging with
the optimal order of $N^{-3/4}$.
\end{abstract}

\section{Introduction}

We consider the unit sphere $\mathbb{S}^2 = \{\boldsymbol{z} =
(z_1,z_2,z_3) \in \mathbb{R}^3: \|\boldsymbol{z}\| =
\sqrt{z_1^2+z_2^2+z_3^2} = 1\}$. Let $f:\mathbb{S}^2\to\mathbb{R}$ be integrable. Then we estimate the integral $\int_{\mathbb{S}^2} f \, \mathrm{d}\sigma_2$, where $\sigma_2$ is the normalized Lebesgue surface area measure (that is $\int_{\mathbb{S}^2} \mathrm{d} \sigma_2 = 1$), by a quasi-Monte Carlo type rule
\begin{equation*}
Q_N(f) = \frac{1}{N}\sum_{k=0}^{N-1} f(\bsz_k),
\end{equation*}
where $Z_N = \{\bsz_0,\ldots, \bsz_{N-1}\} \subseteq \mathbb{S}^2$ are the quadrature points on the sphere. Since the surface area measure is normalized to $1$, it follows that we have $Q_N(f) = \int_{\mathbb{S}^2} f \,\mathrm{d}\sigma_2$ for every constant function $f$. In the following we review known results to put the result of this paper into context. Although some results are known for spheres $\mathbb{S}^d$ of dimension $d \ge 2$, we only state them for the sphere $\mathbb{S}^2$ since this paper only deals with the $2$-sphere.

Using Stolarsky's invariance principle \cite{St1973} (also see \cite{BrDi2011_pre} and \cite{BrWo20xx}), it follows that the worst-case error for numerical integration in a certain reproducing kernel Hilbert space is given by the spherical cap $L_2$-discrepancy of the quadrature points. To obtain quadrature points we use a transformation from $[0,1]^2$ to $\mathbb{S}^2$ which preserves area. 
Specifically, we use the transformation $\Phi:[0,1]^2\to\mathbb{S}^2$ with
\begin{equation*}
\Phi(y_1,y_2) = \left(2 \cos (2\pi y_1) \sqrt{y_2- y_2^2}, 2 \sin (2\pi y_1) \sqrt{y_2- y_2^2}, 1-2 y_2\right).
\end{equation*}
The function $\Phi$ maps axis-parallel rectangles in the unit square to zonal spherical rectangles of equal area. It is natural then that the discrepancy on the sphere with respect to spherical rectangles is the same as the discrepancy with respect to rectangles in $[0,1]^2$. Since point sets $Z_N$ for which a discrepancy based on $K$-regular test sets ($K$ fixed, see Sj{\"o}gren~\cite{Sj1972}) converges to zero as $N \to \infty$ are uniformly distributed over the sphere, we show that the digital nets lifted to the sphere via $\Phi$ are also uniformly distributed.

Furthermore, we study the spherical cap $L_2$-discrepancy of point sets obtained in this way. We prove that the order of convergence for $(0,m,2)$-nets lifted to the sphere via $\Phi$ is $\mathcal{O}(N^{-1/2} (\log N)^{1/2})$. This improves upon the bound on the spherical cap $L_2$-discrepancy in \cite{LuPhSa1986} of $\mathcal{O}(N^{-1/2} \log N)$ by a factor of $\sqrt{\log N}$. On the other hand, the optimal order of convergence is $\mathcal{O}(N^{-3/4})$. But numerical experiments do suggest that the $(0,m,2)$-nets lifted to the sphere via $\Phi$ achieve optimal convergence rate. We conjecture that this order of convergence is the correct one.

Due to the difficulties of having a satisfactory notion of 'bounded variation' on $\mathbb{S}^2$ there is no Koksma-Hlawka inequality on $\mathbb{S}^2$ per se. However, the concept of {\em uniform distribution of a sequence of $N$-point configurations with respect to every function in a function space}, say Sobolev spaces over $\mathbb{S}^2$, can be used as quality criterion. 
For example, Cui and Freeden~\cite{CuFr1997} introduced the concept of a generalized discrepancy on $\mathbb{S}^2$ which involves pseudodifferential operators. Using the operator $\mathbf{D} = ( - 2 \Delta^* )^{1/2} ( - \Delta^* + 1 / 4 )^{1/4}$ ($\Delta^*$ is the Beltrami operator on $\mathbb{S}^2$) of order $3/2$ they arrived at
\begin{equation*}
\left| \frac{1}{N} \sum_{k = 1}^N f(\bsx_k) - \int f \dd \sigma_2 \right| \leq \sqrt{6} \, \mathrm{D}(\{ \bsx_1, \dots, \bsx_N \}; \mathbf{D} ) \, \| f \|_{\HH^{3/2}},
\end{equation*}
where $f$ is from the Sobolev space $\HH^{3/2}(\mathbb{S}^2)$. In this notion, a sequence $\{Z_N\}$ of $N$-point configurations is called {\em $\mathbf{D}$-equidistributed in $\HH^{3/2}(\mathbb{S}^2)$} if $\lim_{N\to\infty} \mathrm{D}(\{ \bsz_1, \dots, \bsz_N \}; \mathbf{D} ) = 0$. The generalized discrepancy associated with $\mathbf{D}$ can be easily computed by way of
\begin{equation} \label{eq:sum.log.discr}
4 \pi \left[ \mathrm{D}(\{ \bsz_1, \dots, \bsz_N \}; \mathbf{D} ) \right]^2 = 1 - \frac{1}{N^2} \sum_{k, \ell = 1}^N \log \left( 1 + \left\| \bsz_\ell - \bsz_k \right\| / 2 \right)^2.
\end{equation}
Sloan and Womersley~\cite{SlWo2004} showed that $[\mathrm{D}(\{
\bsz_1, \dots, \bsz_N \}; \mathbf{D} )]^2$ has a natural
interpretation as the worst-case error for the equally weighted
quadrature rule $Q_N$ associated with the points $\bsz_1, \dots,
\bsz_N$ for functions $f$ from the unit ball in
$\HH^{3/2}(\mathbb{S}^2)$ provided with the reproducing kernel
$K(\bsy, \bsz) = 2 [ 1 - \log( 1 + \| \bsy - \bsz \| / 2 ) ]$. In
\cite{BrWo20xx} this approach is followed further, yielding an even
simpler notion of discrepancy (see
Section~\ref{sec:worst-case-error}), namely
\begin{equation} \label{eq:sum.dist.discr}
\left[ \mathrm{D}(\{ \bsz_1, \dots, \bsz_N \} ) \right]^2 = \frac{4}{3} - \frac{1}{N^2} \sum_{k, \ell = 1}^N \left\| \bsz_\ell - \bsz_k \right\|
\end{equation}
also used in the setting of the Sobolev space $\HH^{3/2}(\mathbb{S}^2)$ now provided with the reproducing kernel $K(\bsy, \bsz) = (8 / 3) - \| \bsy - \bsz \|$. Low-discrepancy configurations in the above contexts can be found by maximizing the respective double sums in \eqref{eq:sum.log.discr} and \eqref{eq:sum.dist.discr}. This leads into the realm of the discrete (minimum) energy problem on the sphere where points are thought to interact according to a Riesz $s$-potential $1/r^s$ ($s>0$) or logarithmic potential $\log(1/r)$ ($s=0$) and $r$ is the Euclidean distance in the ambient space. It is known that the minimizer $Z_N$ ($N\geq2$) of the associated $s$-energy functionals form a sequence which is asymptotically uniformly distributed for each fixed $s\geq0$. We refer the interested reader to \cite{BaBr2009, BeCa2009, BeClDu2004, BoHaSa2008, Br2008, CoKu2007, HaSa2004, HaSa2005, Wa1990, Wa1992b, Wa1992}. We remark that Sun and Chen~\cite{SuCh2008} employ 'spherical basis functions' (as a counter part to radial basis function on spheres) to investigate uniform distribution on spheres. They also show that the minimizers of the functionals induced by a spherical basis function are asymptotically uniformly distributed.

Let $C(\bsz, t) \DEF \{ \bsy \in \mathbb{S}^2 : \langle \bsy, \bsz \rangle \leq t \}$ be a spherical cap and let $\mathcal{C} \DEF \{C(\bsz,t): \bsz \in \mathbb{S}^2, -1 \le t \le 1\}$ denote the set of all spherical caps. In \cite{St1973}, Stolarsky established a beautiful invariance principle on the $2$-sphere (and higher-dimensional spheres),
\begin{equation} \label{eq:stolarsky.inv.prncpl}
\frac{1}{N^2} \sum_{k, \ell = 1}^N \left\| \bsz_\ell - \bsz_k \right\| + 4 \left[ D_2( Z_N; \mathbb{S}^2, \mathcal{C} ) \right]^2 = \int_{\mathbb{S}^2} \int_{\mathbb{S}^2} \left\| \bsy - \bsz \right\| \dd \sigma_2(\bsy) \dd \sigma_2(\bsz),
\end{equation}
connecting the sum of distances, the $L_2$-discrepancy (with respect to spherical caps $C(\bsz, t)$),
\begin{equation*}
D_2( Z_N; \mathbb{S}^2, \mathcal{C} ) \DEF \left[ \int_{-1}^1 \int_{\mathbb{S}^2} \left| \frac{\left| Z_N \cap C( \bsz, t) \right|}{N} - \sigma_2( C( \bsz, t ) ) \right|^2 \dd \sigma_2( \bsz ) \dd t \right]^{1/2},
\end{equation*}
and the distance integral. We observe that, on $\mathbb{S}^2$, the discrepancy in \eqref{eq:sum.dist.discr} is essentially (up to a factor $2$) the $L_2$-discrepancy. Originally, Stolarsky used his invariance principle and sharp result for discrepancy estimates by Schmidt~\cite{Sch1969} to establish bounds for the sum of distances. Beck~\cite{Be1984} then improved Stolarsky's lower bound, finally arriving at
\begin{equation} \label{eq:Stolarsky.Beck.bounds}
c \, N^{-3/2} \leq \int \int \left\| \bsx - \bsy \right\| \dd \sigma_2(\bsx) \dd \sigma_2(\bsy) - \frac{1}{N^2} \sum_{k, \ell = 1}^N \left\| \bsz_\ell - \bsz_k \right\| \leq C \, N^{-3/2}
\end{equation}
for some universal positive constants $c$ and $C$ independent of $N$. Consequently, relations \eqref{eq:Stolarsky.Beck.bounds} yield lower and upper bound for the $L_2$-discrepancy by means of the invariance principle which are sharp with respect to order of $N$:
\begin{equation*}
c^\prime \, N^{-3 / 4} \leq D_2( Z_N^*;\mathbb{S}^2, \mathcal{C} ) \leq C^\prime \, N^{-3 / 4}
\end{equation*}
for a sequence of optimal $L_2$-discrepancy $N$-point configurations $Z_N^*$ on $\mathbb{S}^2$. Observe, that optimal $L_2$-discrepancy and optimal spherical cap discrepancy configurations satisfy estimates with the same order $N^{-3/4}$ (apart from a possible $\sqrt{\log N}$ term), see \cite{Be1984}.

\subsection{Quasi-Monte Carlo rules in the unit square}

Quasi-Monte Carlo algorithms $\widehat{I}(f) = \frac{1}{N}
\sum_{n=0}^{N-1} f(\bsx_n)$ are used to approximate integrals $I(f)
= \int_{[0,1]^2} f(\bsx) \,\mathrm{d} \bsx$. The crux of the method
is to choose the quadrature points $\bsx_0,\ldots, \bsx_{N-1}$ as
uniformly distributed as possible. The difference to Monte Carlo is
the method by which the sample points $\bsx_0,\ldots, \bsx_{N-1} \in
[0,1)^2$ are chosen. The aim of QMC is to chose those points such
that the integration error
\begin{equation*}
\left|\int_{[0,1]^2} f(\bsx) \,\mathrm{d}\bsx - \frac{1}{N}
\sum_{n=0}^{N-1} f(\bsx_n) \right|
\end{equation*}
achieves the (almost) optimal rate of convergence as $N \to\infty$ for certain classes of
functions $f:[0,1]^2 \to \mathbb{R}$. For instance, for the family of
functions $f$ with bounded variation in the sense of
Hardy and Krause (for which we write $\|f\|_{\mathrm{HK}} < \infty$)
it is known that the best rate of convergence for the worst case
error is
\begin{equation*}
e = \sup_{f, \|f\|_{\mathrm{HK}} <\infty} \left|\int_{[0,1]^2}
f(\bsx) \,\mathrm{d} \bsx - \frac{1}{N} \sum_{n=0}^{N-1} f(\bsx_n)
\right| \asymp N^{-1+\varepsilon} \quad \mbox{for all } \varepsilon
> 0.
\end{equation*}
More precisely, there are constants $c,C > 0$ such that 
\begin{equation*}
c N^{-1} \sqrt{\log N} \le e \le C N^{-1} \sqrt{\log N},
\end{equation*}
see \cite{DiPi2010}.


There is an explicit construction of the sample points
$\bsx_0,\ldots, \bsx_{N-1}$ for which the optimal rate of
convergence is achieved. One criterion for how uniformly a set of points $P_N = \{\bsx_0,\ldots,
\bsx_{N-1}\}$ is distributed in the unit square is the star discrepancy
\begin{equation*}
D^\ast(P_N; [0,1]^2, \mathcal{R}^\ast) = \sup_{\bsy \in [0,1]^2} \left| \delta_{P_N}(\bsy) \right|, \qquad \delta_{P_N}(\bsy) = \frac{1}{N} \sum_{n=0}^{N-1} 1_{\bsx_i \in
[\bszero,\bsy)} - \mathrm{Area}([\bszero,\bsy)),
\end{equation*}
where $[\bszero,\bsy)=\prod_{i=1}^s [0,y_i)$ with $\bsy=(y_1, y_2) \in [0,1]^2$, $\mathcal{R}^\ast = \{[\bszero,\bsy) : \bsy \in [0,1]^2\}$, $\mathrm{Area}([\bszero,\bsy))= y_1 y_2$ is the area of $[\bszero,\bsy)$ and 
\begin{equation*}
1_{\bsx_i \in [\bszero,\bsy)} = 
\begin{cases}
1 & \text{if $\bsx_i \in [\bszero,\bsy)$,} \\
0 & \text{otherwise.}
\end{cases}
\end{equation*}
The quantity $\delta_{P_N}(\bsy)$ is called the local discrepancy (of $P_N$).

The connection between this criterion and the integration error is given by
the Koksma-Hlawka inequality
\begin{equation*}
\left|\int_{[0,1]^2} f(\bsx) \,\mathrm{d} \bsx - \frac{1}{N}
\sum_{n=0}^{N-1} f(\bsx_n) \right| \le D^\ast(P_N; [0,1]^2, \mathcal{R}^\ast)
\|f\|_{\mathrm{HK}}.
\end{equation*}

Informally, a sequence of points $\bsx_0,\bsx_1,\ldots \in [0,1)^s$ is called a low-discrepancy sequence, if
\begin{equation*}
D^\ast(\{\bsx_0,\ldots, \bsx_{N-1}\}; [0,1]^2, \mathcal{R}^\ast) = \mathcal{O}(N^{-1} (\log N)^2) \quad\mbox{as } N \to \infty.
\end{equation*}
Notice that for such a point sequence, the Koksma-Hlawka inequality implies the optimal rate of convergence
of the integration error (apart from the power of the $\log N$ factor), since for a given integrand $f$ the variation $\|f\|_{\mathrm{HK}}$ does not depend on $P_N$ and $N$ at all. 

The concept of digital nets introduced by Niederreiter~\cite{Ni1988} provides the to date most efficient method to explicitly construct points sets $P_N = \{\bsx_0,\ldots, \bsx_{N-1} \} \in [0,1)^2$ with small discrepancy, that is
\begin{equation*}
D^\ast(P_N; [0,1]^2, \mathcal{R}^\ast) \le C N^{-1} \log N.
\end{equation*}
They are introduced in the next section.


\subsection{Nets and sequences in the unit square}

In this section we give a brief overview of (digital) $(0,m,2)$-nets
and (digital) $(0,2)$-sequences. For a comprehensive introduction
see \cite{DiPi2010}.

The aim is to construct a point set $P_N = \{\bsx_0,\ldots,
\bsx_{N-1}\}$ such that the star discrepancy satisfies
$D^\ast(P_N;[0,1]^2, \mathcal{R}^\ast) \le C N^{-1} \log N$. To do so, we discretize
the problem by choosing the point set $P_N$ such that the local
discrepancy $\delta_{P_N}(\bsy) = 0$ for certain $\bsy \in [0,1]^2$
(those $\bsy$ in turn are chosen such that the star discrepancy of
$P_N$ is small, as we explain below).

It turns out that, when one chooses a base $b \geq 2$ and $N = b^m$,
then for every natural number $m$ there exist point sets
$P_{b^m}=\{\bsx_0,\ldots, \bsx_{b^m-1}\}$ such that
$\delta_{P_{b^m}}(\bsy) = 0$ for all $\bsy=(y_1, y_2)$ of the
form
\begin{equation*}
y_i = a_i / b^{d_i} \quad \text{for $i = 1,2$,}
\end{equation*}
where $0 < a_i \le b^{d_i}$ is an integer and $d_1 + d_2
\le m$ with $d_1, d_2 \ge 0$. A point set
$P_N$ which satisfies this property is called a $(0,m,2)$-net in
base $b$. An equivalent description of $(0,m,2)$-nets in base $b$ is
given in the following definition.

\begin{definition}
Let $b \ge 2$ and $m \ge 1$ be integers. A point set
$P_{b^m} \subseteq [0,1)^2$ consisting of $b^m$ points is
called a $(0,m,2)$-net in base $b$, if for all nonnegative integers
$d_1,d_2$ with $d_1 + d_2 = m$, the elementary
interval
\begin{equation*}
\prod_{i=1}^2 \left[\frac{a_i}{b^{d_i}},
\frac{a_i+1}{b^{d_i}}\right)
\end{equation*}
contains exactly $1$ point of $P_{b^m}$ for all integers $0
\le a_i < b^{d_i}$.
\end{definition}
It is also possible to construct nested $(0,m,2)$-nets, thereby
obtaining an infinite sequence of points.
\begin{definition}
Let $b \ge 2$ be an integer. A sequence
$\bsx_0,\bsx_1,\ldots \in [0,1)^2$ is called a $(0,2)$-sequence in
base $b$, if for all $m > 0$ and for all $k \ge 0$, the point set
$\bsx_{k b^m}, \bsx_{kb^m + 1}, \ldots, \bsx_{(k+1)b^m-1}$ is a
$(0,m,2)$-net in base $b$.
\end{definition}

It can be shown that a $(0,m,2)$-net in base $b$ satisfies
\begin{equation*}
D^\ast(P_N; [0,1]^2, \mathcal{R}^\ast) \le C_{b} \frac{m}{b^{m-1}},
\end{equation*}
and the first $N$ points $\bsx_0,\ldots, \bsx_{N-1}$ of a $(0,2)$-sequence in base $b$ satisfy
\begin{equation*}
D^\ast(\{\bsx_0,\ldots, \bsx_{N-1}\}; [0,1]^2, \mathcal{R}^\ast) \le C_{b} \frac{(\log N)^2}{N} \quad \mbox{for all } N \ge 1,
\end{equation*}
where $C_{b} > 0$ depends on $b$ but not on $m$ and $N$. See \cite{DiPi2010,Ni1992} for details.

Explicit constructions of $(0,m,2)$-nets and $(0,2)$-sequences can
be obtained using the digital construction scheme. Such point sets
are then called digital nets (or digital $(0,m,2)$-nets if the point
set is a $(0,m,2)$-net) or digital sequences (or digital
$(0,2)$-sequence if the sequence is a $(0,2)$-sequence).

To describe the digital construction scheme, let $b$ be a prime
number and let $\mathbb{Z}_b$ be the finite field of order $b$ (a
prime power and the finite field $\mathbb{F}_b$ could be used as
well). Let $C_1, C_2 \in \mathbb{Z}_b^{m \times m}$ be $2$
matrices of size $m \times m$ with elements in $\mathbb{Z}_b$. The $i$th coordinate $x_{n,i}$ of the $n$th point $\bsx_n=(x_{n,1}, x_{n,2})$ of the digital net is
obtained in the following way. For $0 \le n < b^m$ let $n = n_0 +
n_1 b + \cdots + n_{m-1} b^{m-1}$ be the base $b$ representation of
$n$. Let $\vec{n} = (n_0,\ldots, n_{m-1})^\top \in \mathbb{Z}_b^m$
denote the vector of digits of $n$. Then let
\begin{equation*}
\vec{y}_{n,i} = C_i \vec{n}.
\end{equation*}
For $\vec{y}_{n,i} = (y_{n,i,1},\ldots, y_{n,i,m})^\top \in
\mathbb{Z}_b^{m}$ we set
\begin{equation*}
x_{n,i} = \frac{y_{n,i,1}}{b} + \cdots + \frac{y_{n,i,m}}{b^{m}}.
\end{equation*}
To construct digital sequences, the generating matrices $C_1,
C_2$ are of size $\infty \times \infty$.

The search for $(0,m,2)$-nets and $(0,2)$-sequences has now been
reduced to finding suitable matrices $C_1, C_2$. Explicit
constructions of such matrices are available and where introduced
(in chronological order) by Sobol~\cite{So1967},
Faure~\cite{Fa1982}, Niederreiter~\cite{Ni1988}, Niederreiter and
Xing~\cite{NiXi1996,NiXi1995,XiNi1995}, as well as others. For instance, to obtain a digital $(0,m,2)$-net over $\mathbb{Z}_2$ one can choose
\[
C_{1} = \left(
\begin{array}{ccccc}
  1      & 0 & \ldots & 0 & 0 \\
  0      & 1 & \ddots &   & 0 \\
\vdots & \ddots & \ddots & \ddots & \vdots \\
0 &  & \ddots & 1 & 0 \\
0 & 0 & \ldots & 0 & 1
\end{array} \right) \mbox{ and }\;\;
C_{2} = \left(
\begin{array}{ccccc}
  {0 \choose 0} & {1 \choose 0} & \ldots & \ldots & {m-1 \choose 0} \\
  0             & {1 \choose 1} & \ldots & \ldots & {m-1 \choose 1} \\
\vdots & \ddots & \ddots & & \vdots \\
  0             & \ldots & 0 & {m-2 \choose m-2}      & {m-1 \choose m-2}\\
  0             & \ldots    & \ldots     & 0      & {m-1 \choose m-1}
\end{array} \right),
\]
where the binomial coefficients are taken modulo $2$. Notice that the matrices $C_1, C_2$ can be extended to $\infty \times \infty$ matrices yielding generating matrices for a digital $(0,2)$-sequence over $\mathbb{Z}_2$. This example was first provided by Sobol$'$ in \cite{So1967}.

\section{Spherical rectangle discrepancy of point sets on the unit sphere $\mathbb{S}^2$}

We introduce a transformation to lift $(0,m,2)$-nets to the sphere $\mathbb{S}^2$. To do so, we represent each point on the sphere $\mathbb{S}^2$ by using scaled spherical coordinates. We define $T: [0,1) \times [0,1]  \mapsto  \mathbb{S}^2$ by
\begin{equation*}
T(\theta,\phi)  =  \left(\cos (2 \pi \theta) \sin (\pi \phi),\sin (2 \pi \theta) \sin (\pi \phi), \cos  (\pi \phi) \right).
\end{equation*}

Let $0 \le \theta_1 \le \theta \le \theta_2 \le 1$ and $0 \le \phi_1
\le \phi_2 \le 1$. The part of the sphere
\begin{equation*}
\Omega_{\theta_1,\theta_2, \phi_1,\phi_2} = \{T(\theta,\phi) \in \mathbb{S}^2:
\theta_1 \le \theta < \theta_2, \phi_1 \le \phi < \phi_2\}
\end{equation*}
has area
\begin{equation*}
\mathrm{Area}(\Omega_{\theta_1,\theta_2, \phi_1,\phi_2}) = 2 \pi
(\theta_2-\theta_1) \left[\cos (\pi \phi_1) - \cos (\pi \phi_2) \right].
\end{equation*}
Let
\begin{align*}
\Gamma_{\theta_1,\theta_2,\phi_1,\phi_2} &:= \frac{\mathrm{Area}(\Omega_{\theta_1,\theta_2,\phi_1,\phi_2})}{\mathrm{Area}(\mathbb{S}^2)} = (\theta_2-\theta_1) \frac{\cos (\pi \phi_1) - \cos (\pi \phi_2)}{2},
\end{align*}
be the area of $\Omega_{\theta_1,\theta_2,\phi_1,\phi_2}$ normalized with respect to the surface area of the unit sphere. Therefore we have $\Gamma_{0,1,0,1} = 1$. We can  now define a discrepancy measure of point sets on the sphere with respect to sets $\Omega_{\theta_1,\theta_2,\phi_1,\phi_2}$.

\begin{definition}\label{def_disc_sphere}
Let $Z_N = \{\bsz_0,\ldots, \bsz_{N-1}\} \subseteq \mathbb{S}^2$. Then the extreme spherical rectangle discrepancy of $Z_N$ on $\mathbb{S}^2$ with respect to $\Omega = \{\Omega_{\theta_1,\theta_2,\phi_1,\phi_2}: 0 \le \theta_1 < \theta_2 \le 1, 0 \le \phi_1 < \phi_2 \le 1\}$ is defined by
\begin{equation*}
D_N(Z_N;\mathbb{S}^2,\Omega) = \sup_{\satop{0 \le \theta_1 < \theta_2 \le 1}{0 \le \phi_1 < \phi_2 \le 1}} \left|\frac{1}{N} \sum_{n=0}^{N-1} 1_{\bsz_n \in \Omega_{\theta_1,\theta_2,\phi_1,\phi_2}} - \Gamma_{\theta_1,\theta_2,\phi_1,\phi_2} \right|,
\end{equation*}
where 
\begin{equation*}
1_{\bsz_n \in \Omega_{\theta_1,\theta_2,\phi_1,\phi_2}} = 
\begin{cases}
1 & \text{if $\bsz_n \in \Omega_{\theta_1,\theta_2,\phi_1,\phi_2}$} \\
0 & \text{otherwise.}
\end{cases}
\end{equation*}
%

The spherical rectangle star-discrepancy of $Z_N$ on $\mathbb{S}^2$ with respect to $\Omega^\ast = \{\Omega_{0,\theta,0,\phi}:0\le \theta \le 1, 0 \le \phi \le 1\}$ is defined by
\begin{equation*}
D^\ast_N(Z_N;\mathbb{S}^2,\Omega^\ast) = \sup_{\satop{0 \le \theta \le 1}{0 \le \phi \le 1}} \left|\frac{1}{N} \sum_{n=0}^{N-1} 1_{\bsz_n \in \Omega_{0,\theta,0,\phi}} - \Gamma_{0,\theta,0,\phi} \right|.
\end{equation*}
\end{definition}

The expression $\frac{1}{N} \sum_{n=0}^{N-1} 1_{\bsz \in
\Omega_{\theta_1,\theta_2,\phi_1,\phi_2}}$ denotes the proportion of
points of $Z_N$ in
$\Omega_{\theta_1,\theta_2,\phi_1,\phi_2}$. Note that the set
$\Omega_{\theta_1,\theta_2,\phi_1,\phi_2}$ always includes the North
Pole $(0,0,1)$ and never the South Pole $(0,0,-1)$.

Further, the discrepancy measure includes the discrepancy of
spherical caps centered at the North Pole $(0,0,1)$ and South Pole
$(0,0,-1)$. The spherical cap at the North Pole is obtained by using
$\Omega_{0,1,0,\phi}$. Let $\phi > 0$ be small. Then
$\frac{1}{N}\sum_{n=0}^{N-1} 1_{\bsz_n \in \Omega_{0,1,0,\phi}}$ is
the proportion of points in the spherical cap $\Omega_{0,1,0,\phi}$
and $4\pi \Gamma_{0,1,0,\phi}$ is the area of the spherical cap.
Hence 
\begin{equation*}
\left|\frac{1}{N} \sum_{n=0}^{N-1} 1_{\bsz_n \in
\Omega_{0,1,0,\phi}} - \Gamma_{0,1,0,\phi}\right|
\end{equation*}
measures the discrepancy at the North Pole. The South Pole is included in the
following way. Let $\phi < 1$ be close to $1$. Then $\mathbb{S}^2
\setminus \Omega_{0,1,0,\phi}$ is the spherical cap centered at the
South Pole. Then 
\begin{equation*}
\frac{1}{N} \sum_{n=0}^{N-1} 1_{\bsz_n \in
\mathbb{S}^2 \setminus \Omega_{0,1,0,\phi}} = 1-\frac{1}{N}
\sum_{n=0}^{N-1} 1_{\bsz_n \in \Omega_{0,1,0,\phi}}
\end{equation*}
is the proportion of points in the spherical cap centered at the South
Pole. Further, $4\pi (1-\Gamma_{0,1,0,\phi})$ is the area of the
spherical cap $\mathbb{S}^2 \setminus \Omega_{0,1,0,\phi}$. Hence
\begin{equation*}
\left|\frac{1}{N} \sum_{n=0}^{N-1} 1_{\bsz_n \in \Omega_{0,1,0,\phi}} - \Gamma_{0,1,0,\phi} \right| = \left|\frac{1}{N} \sum_{n=0}^{N-1} 1_{\bsz_n \in \mathbb{S}^2 \setminus \Omega_{0,1,0,\phi}} - \frac{\mathrm{Area}(\mathbb{S}^2\setminus
\Omega_{0,1,0,\phi})}{\mathrm{Area}(\mathbb{S}^2)}\right|
\end{equation*}
measures the discrepancy at the South Pole.

In the following we construct point sets $Z_N = \{\bsz_0,\ldots,
\bsz_{N-1}\}$ on the sphere $\mathbb{S}^2$ such that
$D_N(Z_N;\mathbb{S}^2,\Omega)$ is small. This can be done by relating the spherical rectangle discrepancy to an analogous discrepancy on $[0,1]^2$.

\begin{definition}\label{def_disc_square}
Let $P_N = \{\bsx_0,\ldots, \bsx_{N-1}\} \subseteq [0,1)^2$. Then the extreme discrepancy of $P_N$ on $[0,1)^2$ with respect to $\mathcal{R} = \{[a_1,a_2) \times [c_1,c_2): 0 \le a_1 < a_2 \le 1, 0 \le c_1 < c_2 \le 1\}$ is defined by
\begin{equation*}
D(P_N;[0,1)^2,\mathcal{R}) = \sup_{\satop{0 \le a_1 < a_2 \le 1}{0 \le c_1 < c_2 \le 1}} \left|\frac{1}{N} \sum_{n=0}^{N-1} 1_{\bsx_n \in [a_1,a_2) \times [c_1,c_2)} - (a_2-a_1)(c_2-c_1) \right|,
\end{equation*}
where 
\begin{equation*}
1_{\bsx_n \in [a_1,a_2) \times [c_1,c_2)} = 
\begin{cases}
1 & \text{if $\bsx_n \in [a_1,a_2) \times [c_1,c_2)$,} \\
0 & \text{otherwise.}
\end{cases}
\end{equation*}

The star-discrepancy of $P_N$ on $[0,1)^2$ with respect to $\mathcal{R}^\ast = \{[0,a)\times [0,c):0 \le a \le 1, 0 \le c \le 1\}$ is defined by
\begin{equation*}
D^\ast(P_N;[0,1)^2,\mathcal{R}^\ast) = \sup_{\satop{0 \le a \le 1}{0 \le c \le 1}} \left|\frac{1}{N} \sum_{n=0}^{N-1} 1_{\bsx_n \in [0,a)\times [0,c)} - ac \right|.
\end{equation*}
\end{definition}

Let $P_N$ be an arbitrary point set in $[0,1)^2$. It is known, see for instance \cite[Chapter~3]{DiPi2010}, that
\begin{equation*}
D^\ast(P_N; [0,1)^2, \mathcal{R}^\ast) \le D_N(P_N; [0,1)^2, \mathcal{R}) \le 4 D^\ast(P_N; [0,1)^2, \mathcal{R}^\ast).
\end{equation*}

The following lemma establishes a connection between the discrepancies on $[0,1)^2$ and $\mathbb{S}^2$ defined above.
\begin{lemma}\label{lem_disc}
Let $P_N = \{\bsx_0,\ldots, \bsx_{N-1}\} \in (0,1)^2$ and
$\bsx_n = (x_{n,1}, x_{n,2})$ for $0 \le n < N$. Let
\begin{equation*}
\bsz_n = T\left(x_{n,1}, \frac{\arccos (1-2x_{n,2})}{\pi}\right) \qquad \text{for $0 \le n < N$} 
\end{equation*}
and $Z_N=\{\bsz_0,\ldots, \bsz_{N-1}\}$. Then $\bsz_n \in \mathbb{S}^2$ for $0 \le n < N$ and
\begin{equation*}
D_N(Z_N; \mathbb{S}^2, \Omega) = D_N(P_N; [0,1)^2, \mathcal{R})
\end{equation*}
and
\begin{equation*}
D^\ast_N(Z_N; \mathbb{S}^2, \Omega^\ast) = D_N(P_N; [0,1)^2, \mathcal{R}^\ast).
\end{equation*}
\end{lemma}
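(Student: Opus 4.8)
The plan is to show that the lift $\bsx_n \mapsto \bsz_n$ sets up an exact, area-preserving and incidence-preserving correspondence between axis-parallel rectangles in $[0,1)^2$ and spherical rectangles $\Omega_{\theta_1,\theta_2,\phi_1,\phi_2}$, so that the local discrepancy is literally the same quantity on both sides and the suprema coincide. First I would record that the lift splits into two independent coordinate maps: the first coordinate is the identity, $\theta_n = x_{n,1}$, while the second is governed by the change of variable $\phi = \frac{\arccos(1-2x_{n,2})}{\pi}$, equivalently $x_{n,2} = \frac{1-\cos(\pi\phi)}{2}$. The key observation is that $g(\phi) := \frac{1-\cos(\pi\phi)}{2}$ is a strictly increasing continuous bijection of $[0,1]$ onto itself (its derivative $\frac{\pi}{2}\sin(\pi\phi)$ is positive on $(0,1)$, and $g(0)=0$, $g(1)=1$).

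Next I would translate membership in a spherical rectangle into membership in a planar rectangle. Given parameters $(\theta_1,\theta_2,\phi_1,\phi_2)$, set $a_i = \theta_i$ and $c_i = g(\phi_i) = \frac{1-\cos(\pi\phi_i)}{2}$. Because $\theta_n = x_{n,1}$ and $g$ is strictly increasing, the two defining inequalities $\theta_1 \le \theta_n < \theta_2$ and $\phi_1 \le \phi_n < \phi_2$ are equivalent to $a_1 \le x_{n,1} < a_2$ and $c_1 \le x_{n,2} < c_2$; that is, $\bsz_n \in \Omega_{\theta_1,\theta_2,\phi_1,\phi_2}$ if and only if $\bsx_n \in [a_1,a_2)\times[c_1,c_2)$. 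Hence the two counting functions agree exactly. A short computation then shows the normalized areas agree as well:
\begin{equation*}
\Gamma_{\theta_1,\theta_2,\phi_1,\phi_2} = (\theta_2-\theta_1)\frac{\cos(\pi\phi_1)-\cos(\pi\phi_2)}{2} = (a_2-a_1)(c_2-c_1).
\end{equation*}

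Consequently the local discrepancy of $Z_N$ at $(\theta_1,\theta_2,\phi_1,\phi_2)$ equals the local discrepancy of $P_N$ at $(a_1,a_2,c_1,c_2)$. Since $(\theta_1,\theta_2,\phi_1,\phi_2)\mapsto(a_1,a_2,c_1,c_2)$ is a bijection from the admissible parameter set $\{0 \le \theta_1 < \theta_2 \le 1,\ 0 \le \phi_1 < \phi_2 \le 1\}$ onto $\{0 \le a_1 < a_2 \le 1,\ 0 \le c_1 < c_2 \le 1\}$ (the first pair by the identity, the second by the increasing bijection $g$), taking suprema over both index sets yields $D_N(Z_N;\mathbb{S}^2,\Omega) = D_N(P_N;[0,1)^2,\mathcal{R})$. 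The star version follows by restricting to $\theta_1 = \phi_1 = 0$, which forces $a_1 = 0$ and $c_1 = g(0) = 0$, so that $\Omega^\ast$ corresponds bijectively to $\mathcal{R}^\ast$. I do not expect a genuine obstacle here; the only points requiring care are the strict monotonicity and surjectivity of $g$ (so that the half-open interval structure and the full range of test sets are preserved) and the observation that $P_N \subseteq (0,1)^2$ keeps every $\phi_n$ strictly inside $(0,1)$, so that no point is mapped to a pole.
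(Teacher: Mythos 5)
Your proposal is correct and follows essentially the same route as the paper's proof: the substitution $c_i = \frac{1-\cos(\pi\phi_i)}{2}$ (inverse to the paper's $\phi_i = \arccos(1-2c_i)/\pi$), the resulting identity $\Gamma_{\theta_1,\theta_2,\phi_1,\phi_2} = (a_2-a_1)(c_2-c_1)$, the agreement of the indicator functions, and the use of $P_N \subseteq (0,1)^2$ to avoid the poles. You merely spell out more explicitly the strict monotonicity of the coordinate change and the bijection between the two families of test sets, which the paper leaves implicit.
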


\begin{proof}
It is trivial to check that $\|\bsz_n\|=1$ for all $n$. To show the second part, we set $\theta_1 = a_1$, $\theta_2 = a_2$, $\phi_1 = [ \arccos (1-2 c_1) ] / \pi$ and $\phi_2 = [\arccos (1-2 c_2) ] / \pi$. Then we have
\begin{equation}\label{eq_area}
\Gamma_{\theta_1,\theta_2,\phi_1,\phi_2} = (a_2-a_1) \frac{1-2 c_1 -
(1-2 c_2)}{2} = (a_2-a_1)(c_2-c_1).
\end{equation}
Note that the points $(0,0,1)$ and $(0,0,-1)$ are excluded from the
set $Z_N$ since we assume that $P_N \subseteq
(0,1)^2$. The mapping from $P_N$ to $Z_N$ is
therefore bijective. Therefore we have
\begin{equation*}
1_{\bsz_n\in \Omega_{\theta_1,\theta_2,\phi_1,\phi_2}} = 1_{(y_{n,1},(\arccos(1-2y_{n,2}))/\pi) \in [\theta_1,\theta_2) \times [\phi_1,\phi_2)} = 1_{(y_{n,1},y_{n,2}) \in [a_1,a_2) \times [c_1, c_2)}.
\end{equation*}
Thus the remaining results follow from
Definitions~\ref{def_disc_sphere} and \ref{def_disc_square}.
\end{proof}

\begin{remark}
Note that $\sin (\arccos (1-2x)) = \sqrt{x-x^2}$, whence $\bsz_n= (z_{n,1}, z_{n,2}, z_{n,3})$ where
\begin{align*}
z_{n,1} &=  2 \cos (2\pi x_{n,1}) \sqrt{x_{n,2}-x_{n,2}^2}, \\
z_{n,2} &=  2 \sin (2\pi x_{n,1}) \sqrt{x_{n,2} - x_{n,2}^2}, \\
z_{n,3} &= 1- 2x_{n,2}
\end{align*}
for $0 \le n < N$. To shorten the notation we define
\begin{eqnarray*}
\Phi(x_1,x_2) & = & T\left(x_1, \frac{\arccos (1-2x_2)}{\pi} \right) \\ & = & \left(2\cos(2\pi x_1) \sqrt{x_2-x_2^2}, 2\sin (2\pi x_1) \sqrt{x_2-x_2^2}, 1-2x_2 \right)
\end{eqnarray*}
for $(x_1,x_2) \in [0,1]^2$. Hence, $\bsz_n = \Phi(\bsx_n)$ for $0 \le n < b^m$.

Further, for $J \subseteq [0,1]^2$ we set
\begin{equation*}
\Phi(J) = \{\Phi(\bsx) \in \mathbb{S}^2: \bsx \in J\}.
\end{equation*}
\end{remark}

\subsection{Digital nets on the sphere}

The previous lemma can now be used to obtain a bound on the spherical rectangle discrepancy of $(0,m,2)$-nets lifted to the sphere via the transformation $\Phi$, which we state in the following result.

\begin{theorem}\label{thm_construction}
Let $P_N = \{\bsx_0,\ldots, \bsx_{b^m-1}\} \subseteq
(0,1)^2$ be a $(0,m,2)$-net in base $b$. Let $\bsx_n =
(x_{n,1}, x_{n,2})$ for $0 \le n < b^m$. Let $\bsz_n = \Phi(\bsx_n)$ for $0 \le n < b^m$ and $Z_N = \{\bsz_0,\ldots, \bsz_{b^m-1}\}$. Then
\begin{equation*}
D_{b^m}(Z_N;\mathbb{S}^2, \Omega) \le \frac{b^2}{b+1} \frac{m}{b^m} + \frac{1}{b^m} \left(9 + \frac{1}{b}\right) + \frac{1}{b^{2m}} \left(2b - 1 - \frac{4b+3}{(b+1)^2}\right)
\end{equation*}
and
\begin{equation*}
D^\ast_{b^m}(Z_N;\mathbb{S}^2, \Omega^\ast) \le \frac{b^2}{b+1} \frac{m}{4b^m} + \frac{1}{b^m} \left(\frac{9}{4} + \frac{1}{b}\right) + \frac{1}{b^{2m}} \left(\frac{b}{2} - \frac{1}{4} - \frac{4b+3}{4(b+1)^2}\right).
\end{equation*}
\end{theorem}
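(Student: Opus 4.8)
The plan is to reduce everything to a discrepancy estimate in the unit square and then exploit the defining equidistribution property of a $(0,m,2)$-net. By Lemma~\ref{lem_disc} the lifted point set satisfies $D_{b^m}(Z_N;\mathbb{S}^2,\Omega) = D_{b^m}(P_N;[0,1)^2,\mathcal{R})$, together with the analogous identity relating the two star discrepancies, so the map $\Phi$ plays no further role and it suffices to bound the extreme and the star discrepancy of the net $P_N$ itself. Thus the theorem is really an explicit, constant-tracked version of the classical bound $D^\ast(P_N)\le C_b\,m/b^{m-1}$ quoted above, and the whole argument takes place in $[0,1)^2$.

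First I would treat the star discrepancy. Fix a test box $[0,\alpha)\times[0,\beta)$ and write the base-$b$ expansions $\alpha=\sum_{j\ge1}a_jb^{-j}$ and $\beta=\sum_{j\ge1}c_jb^{-j}$ with digits in $\{0,\dots,b-1\}$. Truncating at level $m$ gives $[0,\alpha)=[0,\alpha^{(m)})\sqcup R_\alpha$ with $|R_\alpha|<b^{-m}$, and similarly for $\beta$; here $[0,\alpha^{(m)})$ is the disjoint union of the $b$-adic elementary intervals produced by the expansion, of which there are $a_j$ of length $b^{-j}$ for each $j\le m$. Forming the product decomposition of $[0,\alpha)\times[0,\beta)$, every product of two such elementary intervals whose resolutions $j_1,j_2$ satisfy $j_1+j_2\le m$ is itself an elementary interval of the net and therefore, by the definition of a $(0,m,2)$-net, contains exactly $b^{m-j_1-j_2}$ points; its local discrepancy vanishes identically. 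Hence only the ``fine'' boxes with $j_1+j_2>m$, together with the remainder strips $R_\alpha$ and $R_\beta$, contribute.

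The key observation is that the fine boxes can be grouped so that each group is cheap. For a fixed first-coordinate interval $I_1$ of resolution $j_1\le m$, the union of all fine boxes sitting above it equals $I_1\times[\beta^{(m-j_1)},\beta^{(m)})$, a box of area strictly less than $b^{-m}$ that is contained in a single elementary interval of total resolution $m$ and therefore holds at most one point; its local discrepancy is thus at most $b^{-m}$ in absolute value. Summing over the at most $\sum_{j\le m}a_j\le(b-1)m$ such intervals, and estimating the remainder contributions $[0,\alpha^{(m)})\times R_\beta$, $R_\alpha\times[0,\beta^{(m)})$ and $R_\alpha\times R_\beta$ separately (each of total area $\mathcal{O}(b^{-m})$ and meeting $\mathcal{O}(1)$ elementary intervals), yields a bound of the shape $C\,m\,b^{-m}+\mathcal{O}(b^{-m})$. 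I would then pass to the extreme discrepancy either through the inequality $D_N\le 4D^\ast$ recorded before the lemma, or---to obtain the stated constants---by running the same expansion argument directly on a general box $[a_1,a_2)\times[c_1,c_2)$, whose four corners produce four copies of the boundary estimate.

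The part I expect to be genuinely delicate is the bookkeeping of constants, not the structure of the argument. The crude count $\sum_j a_j\le(b-1)m$ overshoots the claimed leading coefficient $b^2/(b+1)$, so the gain has to come from weighting each fine group by the actual geometric deficit $b^{-m}-\mathrm{area}$ rather than by $b^{-m}$, from summing the resulting digit-weighted geometric series $\sum_j a_j b^{-(\cdots)}$ sharply, and from using the net property to ensure that the single point admitted by each elementary interval is never charged twice across groups. Carrying this through for both the anchored (star) and the general (extreme) box, and collecting the lower-order $b^{-m}$ and $b^{-2m}$ terms coming from the remainder strips, is exactly what produces the explicit coefficients $b^2/(b+1)$, $9+1/b$ and $2b-1-(4b+3)/(b+1)^2$; the factors $1/4$ in the star bound simply reflect that an anchored box has one corner at the origin and hence fewer active boundary terms.
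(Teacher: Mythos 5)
Your reduction to the unit square is exactly the paper's: Lemma~\ref{lem_disc} converts both spherical discrepancies into the corresponding discrepancies of $P_N$ in $[0,1)^2$, and from that point on $\Phi$ is irrelevant. Where you diverge is that the paper's entire remaining proof is a citation of Theorem~1 and Remark~2 of Dick and Kritzer (reference [DiKr2006]), which supply precisely the two explicit inequalities for the extreme and star discrepancy of a $(0,m,2)$-net in base $b$; you instead sketch a from-scratch proof of that square-discrepancy bound via $b$-adic decomposition of the test box into elementary intervals. Your sketch is structurally the standard argument and is sound as far as it goes: elementary intervals of resolution $j_1+j_2\le m$ have vanishing local discrepancy by the net property, and each group of fine boxes sits inside a single volume-$b^{-m}$ elementary interval, hence contributes at most $b^{-m}$.

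The genuine gap is that this argument, as written, proves a bound with leading term $(b-1)\,m\,b^{-m}$ for the star discrepancy, not the claimed $\frac{b^2}{b+1}\frac{m}{4b^m}$ (for $b=2$ that is a constant of $1$ versus $1/3$), and you explicitly defer the refinement --- weighting each fine group by its actual geometric deficit, sharp summation of the digit-weighted series, and the non-double-counting argument --- that would close the factor of roughly $4$. But that refinement \emph{is} the theorem: the order $m\,b^{-m}$ with an unspecified constant already follows from the classical bound $D^\ast(P_N;[0,1]^2,\mathcal{R}^\ast)\le C_b\,m/b^{m-1}$ quoted earlier in the paper, so the only new content of Theorem~\ref{thm_construction} beyond Lemma~\ref{lem_disc} is the explicit constants. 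Your proposal therefore establishes the correct reduction and the correct order of magnitude, but not the stated inequalities; to finish you would either have to carry out the constant-tracking you outline (essentially reproving Dick--Kritzer) or, as the paper does, invoke their result directly.
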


\begin{proof}
The result follows from Lemma~\ref{lem_disc} and \cite[Theorem~1 and
Remark~2]{DiKr2006}.
\end{proof}

Lemma~\ref{lem_disc} and the result of Roth~\cite{Ro1954} imply that
\begin{equation*}
D_N(Z_N; \mathbb{S}^2, \Omega) \ge D^\ast_N(Z_N;
\mathbb{S}^2, \Omega^\ast) \ge \frac{\lfloor \log_2 N \rfloor +
3}{2^8 N}
\end{equation*}
for all point sets $Z_N = \{\bsz_0,\ldots, \bsz_{N-1}\}
\subseteq \mathbb{S}^2$ (where $\log_2$ denotes the logarithm in base
$2$). Thus the construction of Theorem~\ref{thm_construction} is
optimal with respect to the discrepancy defined in
Definition~\ref{def_disc_sphere}.

Theorem~\ref{thm_construction} is not surprising since the transformation $\Phi$ is area preserving for all rectangles, which follows from \eqref{eq_area}.

\subsection{Uniform distribution on the sphere}

A sequence of $N$-point systems $\{ Z_N \}_{N \geq 2}$ on the unit sphere $\mathbb{S}^2$ in $\RR^{3}$ is said to be {\em asymptotically uniformly distributed on $\mathbb{S}^2$} if the exact integral $I(f) = \int_{\mathbb{S}^2} f \dd \sigma_2$ of any continuous function $f$ on $\mathbb{S}^2$ integrated with respect to the normalized surface area measure $\sigma_2$ on $\mathbb{S}^2$ can be approximated arbitrarily close (as $N$ becomes large) by means of the equally weighted quadrature rules $Q_N$ having these  $Z_N$ as node sets; that is
\begin{equation*}
\lim_{N \to \infty} Q_N(f) = I(f) \qquad \text{for every $f \in C(\mathbb{S}^2)$.} 
\end{equation*}
This limit relation also states that the weak-* limit (which is
defined in such a way) of the discrete measure placing a point mass
$1/N$ at every point in $Z_N$ is given by the natural measure on
$\mathbb{S}^2$. In other words, the limit distribution (as $N\to
\infty$) of the $N$-point configurations $Z_N$ is given by
$\sigma_2$. Let $|Z_N|$ denote the cardinality of the (finite) set
$Z_N$. Therefore, an equivalent characterization is that
\begin{equation*}
\lim_{N \to \infty} \left| Z_N \cap A \right| / N = \sigma_2( A )
\end{equation*}
for every $\sigma_2$-measurable set $A \subseteq \mathbb{S}^2$ (whose boundary has $2$-dimensional Hausdorff measure $0$). Informally speaking, any such set $A$ gets a fair share of points as $N$ becomes large. In fact, it is sufficient to consider spherical caps on $\mathbb{S}^2$. Let $\mathcal{C} = \{C(\bsz,t): \bsz \in \mathbb{S}^2, -1 \le t \le 1\}$ denote the set of all spherical caps. Thus, a natural measure to quantify 'equidistribution' of $N$-point systems on $\mathbb{S}^2$ is the {\em spherical cap discrepancy}
\begin{equation*}
D(Z_N; \mathbb{S}^2, \mathcal{C}) \DEF \sup_{C \subseteq \mathbb{S}^2} \left| \frac{\left|Z_N \cap C \right|}{N} - \sigma_2(C) \right|,
\end{equation*}
where the supremum is extended over all spherical caps. It is well-known that the sequence $\{Z_N\}_{N\geq2}$ is asymptotically uniformly distributed on $\mathbb{S}^2$ if and only if $D(Z_N; \mathbb{S}^2, \mathcal{C}) \to 0$ as $N \to \infty$. One can show that the following assertions are equivalent (cf., for example, \cite{Br2008}):
\begin{enumerate}
\item The sequence $\{ Z_N \}_{N \geq 2}$ is asymptotically uniformly distributed on $\mathbb{S}^2$. 
\item $\lim_{N \to \infty} Q_N(f) = I(f)$ for every $f \in C(\mathbb{S}^2)$.
\item $\lim_{N\to\infty} Q_{N}(Y_{\ell,m}) = 0$ for all spherical harmonics $Y_{\ell,m}$ of degree $\ell\geq1$ from a (real) $L_{2}(\mathbb{S}^{2},\sigma_2)$-orthonormal basis $\{ Y_{\ell,m} \}$. (This is {\em Weyl's criterion} on the sphere.)
\item $\lim_{N\to\infty} D(Z_N; \mathbb{S}^2, \mathcal{C}) = 0$.
\end{enumerate}
The spherical cap discrepancy can be estimated in terms of Weyl sums by means of {\em Erd{\"o}s-Tur{\'a}n type inequalities} \footnote{Such inequalities are a generalization of Erd{\"o}s and Tur{\'a}ns result for the unit circle \cite{ErTu1948I,ErTu1948II}.} (Grabner~\cite{Gr1991}, also cf. Li and Vaaler~\cite{LiVa1999})
\begin{equation*}
D(Z_{N}; \mathbb{S}^2, \mathcal{C}) \leq \frac{c_{1}}{L+1} + \sum_{\ell=1}^{L} \left( \frac{c_{2}}{\ell} + \frac{c_{3}}{L+1} \right) \sum_{m=1}^{Z(d,\ell)}\left| \frac{1}{N} \sum_{j=0}^{N-1} Y_{\ell,m}(\PT{x}_{j}) \right|,
\end{equation*}
where $L$ is any positive integer and the positive constants $c_{1}$, $c_{2}$, and $c_{3}$ are independent of $N$, or {\em LeVeque type inequalities} (\cite{NaSuWa2010}, generalizing LeVeques result for the unit circle \cite{LeV1965})
\begin{equation*}
c_4 \left[ \sum_{\ell=1}^\infty a_\ell \sum_{m=1}^{Z(d,\ell)} \left| \frac{1}{N} \sum_{j=0}^{N-1} Y_{\ell,m}(\PT{x}_{j}) \right|^2 \right]^{1/2} \leq D(Z_{N}; \mathbb{S}^2, \mathcal{C}) \leq c_5 \left[ \sum_{\ell=1}^\infty b_\ell \sum_{m=1}^{Z(d,\ell)} \left| \frac{1}{N} \sum_{j=0}^{N-1} Y_{\ell,m}(\PT{x}_{j}) \right|^2 \right]^{1/(d+2)},
\end{equation*}
where $a_\ell \DEF \Gamma(\ell - 1/2) / \Gamma( \ell + d + 1/2) \asymp 1 / \ell^{d+1} \FED b_\ell$,
%
for some positive constants $c_4$ and $c_5$ which are independent of $N$. The integers
\begin{equation*}
Z(d,0) = 1, \qquad Z(d,\ell) = \left(2\ell+d-1\right) \frac{\Gamma(\ell+d-1)}{\Gamma(d)\Gamma(\ell+1)}
\end{equation*}
denote the number of linearly independent spherical harmonics $Y_{\ell,m}$ of degree $\ell$.

Instead of using spherical caps as test sets, one can define discrepancy with respect to spherical rectangles (as done in this paper) or, as proposed by Sj{\"o}gren~\cite{Sj1972}, with respect to the family of so-called $K$-regular test sets.
A $\sigma_2$-measurable set $A \subseteq \mathbb{S}^2$ is defined to be {\em $K$-regular} if the $\sigma_2$-measure of the  $\delta$-neighborhood ($\delta$ sufficiently small) of its boundary,
\begin{equation*}
\left\{ \bsz \in \mathbb{S}^2 : \dist( A, \bsz ) \leq \delta, \dist( \mathbb{S}^2 \setminus A, \bsz ) \leq \delta \right\}, \quad \dist( A, \bsz ) \DEF \inf_{\bsz^\prime \in A} \left\| \bsz - \bsz^\prime \right\|,
\end{equation*}
is linearly bounded by $K \, \delta$. Clearly, spherical caps (rectangles) are $K$-regular for some $K>0$. In \cite{AnBlGo1999} Andrievskii, Blatt and G{\"o}tz related the discrepancy of a measure with the error in integration of polynomials in the following sense.

\begin{proposition} \label{prop:AnBlGo}
There exists a universal constant $C_0$ such that for every probability measure $\nu$ supported on $\mathbb{S}^2$ and every $K$-regular set $A \subseteq \mathbb{S}^2$ there holds
\begin{equation*}
\left| \nu(A) - \sigma_2(A) \right| \leq C_0 \, \inf_{n \in \NN} \left\{ \frac{K}{n} + C(\nu, n) \right\},
\end{equation*}
where
\begin{equation*}
C(\nu, n) \DEF \sup\left\{\left| \int p \dd \nu - \int p \dd \sigma_2 \right| : \begin{gathered} \text{$p$ polynomial on $\mathbb{S}^2$,}\\\text{$\deg p \leq q n$, $| p | \leq 1$ on $\mathbb{S}^2$} \end{gathered} \right\}
\end{equation*}
and $q = 3$.\footnote{Note that in \cite{AnBlGo1999} everything is done in $\RR^d$.}
\end{proposition}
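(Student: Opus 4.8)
The plan is to reduce the estimate to a one-sided polynomial approximation of the indicator $1_A$ and to let the $K$-regularity control the thickness of the boundary layer on which any polynomial approximant is forced to be inaccurate.

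First I would show that it suffices to construct, for every $n$, two spherical polynomials $p^-,p^+$ of degree at most $qn$ such that
\begin{equation*}
p^-(\bsz) \le 1_A(\bsz) \le p^+(\bsz) \qquad \text{for all } \bsz \in \mathbb{S}^2,
\end{equation*}
with a uniform bound $\max\{\|p^-\|_\infty,\|p^+\|_\infty\} \le M$ for some absolute constant $M$, and with
\begin{equation*}
\int_{\mathbb{S}^2} \left( p^+ - p^- \right) \dd \sigma_2 \le C_1\,\frac{K}{n}.
\end{equation*}
Indeed, since $p^+/M$ has sup-norm at most $1$ and degree at most $qn$, the definition of $C(\nu,n)$ gives $\left| \int p^+ \dd\nu - \int p^+ \dd\sigma_2 \right| \le M\,C(\nu,n)$, while $p^- \le 1_A$ forces $\int p^+ \dd\sigma_2 \le \sigma_2(A) + \int (p^+-p^-)\dd\sigma_2$. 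Combining these,
\begin{align*}
\nu(A) = \int 1_A \dd\nu \le \int p^+ \dd\nu \le \sigma_2(A) + C_1\frac{K}{n} + M\,C(\nu,n),
\end{align*}
and the matching lower bound follows by the same argument applied to $p^-$. Taking the infimum over $n$ yields the proposition with $C_0 \DEF \max\{C_1,M\}$, the factor $q$ being harmless since it only rescales the admissible degree.

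The construction of $p^\pm$ is where the work lies. I would enlarge and shrink $A$ by a geodesic distance $\delta \asymp 1/n$, writing $A^{+}$ for the outer $\delta$-neighbourhood of $A$ and $A^{-}$ for the set of points of $A$ at distance at least $\delta$ from $\partial A$, so that $A^- \subseteq A \subseteq A^+$. The $K$-regularity of $A$ enters exactly here: it gives $\sigma_2(A^+\setminus A^-) \le C\,K\,\delta \asymp C\,K/n$, which is the only place the constant $K$ appears. It then remains to produce polynomials of degree $\le qn$ that majorize $1_{A^+}$ and minorize $1_{A^-}$, stay bounded, and lose only $O(1/n)$ in the $\sigma_2$-integral against the respective indicator; this is a one-sided $L_1$-approximation of an indicator across a smooth interface, and I would supply it from the construction of Andrievskii, Blatt and G\"otz \cite{AnBlGo1999} carried out in the ambient space $\RR^3$ and restricted to $\mathbb{S}^2$. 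Restriction to the sphere is legitimate because every spherical polynomial is the restriction of a polynomial on $\RR^3$ of the same degree, and the dimension $d=3$ of the ambient space is exactly what fixes the admissible degree blow-up to $q=3$.

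The main obstacle is this one-sided step, and it is genuinely more delicate than a two-sided estimate. Smoothing $1_{A^\pm}$ with a positive (Jackson-type) spherical kernel of degree $\asymp n$ does produce a polynomial within $O(K/n)$ of $1_A$ in $L_1(\sigma_2)$, but such a kernel approximant is two-sided: it neither dominates nor is dominated by $1_A$, and its pointwise deficit near $\partial A$ is only $O(1)$ rather than $O(1/n)$. Forcing a genuine pointwise inequality by rescaling, or by adding a constant, inflates the integral by a term proportional to $\sigma_2(A)$ and thereby destroys the $K/n$ rate. What is needed instead is a boundary-concentrated one-sided correction whose own integral is $O(K/n)$, and producing it without a spurious $\log n$ factor is the technical heart of the matter; this is why I would import the one-sided approximation from \cite{AnBlGo1999} rather than rederive it here.
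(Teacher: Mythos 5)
The paper does not prove this proposition at all: it is imported verbatim from Andrievskii, Blatt and G\"otz \cite{AnBlGo1999}, with only the footnote remarking that the original is set in $\RR^d$. So there is no ``paper proof'' to match your argument against. Your reduction is correct and is the standard one: given one-sided polynomial approximants $p^-\le 1_A\le p^+$ of degree $\le qn$, uniformly bounded, with $\int(p^+-p^-)\dd\sigma_2 = \mathcal{O}(K/n)$, the stated inequality follows exactly as you write it, and the identification of where $K$-regularity enters (the measure of the $\delta$-collar of $\partial A$ with $\delta\asymp 1/n$) is the right one. You are also right that the genuine difficulty is the one-sided $L_1$-approximation of the indicator, and that positive-kernel smoothing alone cannot deliver it. But since you ultimately defer precisely that step to \cite{AnBlGo1999}, your proposal is a correct framing of the argument rather than a self-contained proof; both your text and the paper's rest entirely on the cited result. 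Two small points if you wanted to make this rigorous: (i) the loss of the one-sided approximants against $1_{A^{\pm}}$ must itself be $\mathcal{O}(K/n)$, not merely $\mathcal{O}(1/n)$, or else the constant in the final bound would not be universal in $K$; in the construction of \cite{AnBlGo1999} this loss is again controlled by the measure of a boundary collar, so $K$ reappears there too. (ii) The definition of $K$-regularity only controls collars for $\delta$ sufficiently small, so small $n$ must be absorbed into $C_0$ using the trivial bound $|\nu(A)-\sigma_2(A)|\le 1$.
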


In particular, if $\nu$ is the counting measure $\frac{1}{N} \sum_{\bsx \in Z_N} \delta_{\bsz}$ induced by the node set $Z_N$ of an equally weighted quadrature rule $Q_N$ on $\mathbb{S}^2$, then we have that
\begin{equation*}
\nu(A) - \sigma_2(A) = \frac{|Z_{N} \cap A|}{N} - \sigma_2(A), \qquad \int p \dd \nu - \int p \dd \sigma_2 = \frac{1}{N} \sum_{\bsz \in Z_N} p(\bsz)  - \int p \dd \sigma_2
\end{equation*}
in Proposition~\ref{prop:AnBlGo}.

Let $D(Z_N; \mathbb{S}^2, \mathcal{F}_K)$ denote the discrepancy with respect to the family of $K$-regular test sets and $D(Z_N; \mathbb{S}^2, \Omega)$ be the spherical rectangle discrepancy.

\begin{theorem} \label{thm:nets.sphere.equidistr}
Let $\{ Z_N \}$ be a sequence of $N$-point configurations with $N = b^m$, $m \geq 1$, defined by $(0,m,2)$-nets $P_N$ in base $b$ lifted to the sphere $\mathbb{S}^2$ by means of $Z_N = \Phi(P_N)$. Then the following holds:
\begin{enumerate}
\item[(i)] $\lim_{N \to \infty} D(Z_N; \mathbb{S}^2, \mathcal{F}_K) = 0$ for every fixed $K > 0$.
\item[(ii)] $\lim_{N \to \infty} D(Z_N; \mathbb{S}^2, \mathcal{F}_K ) = 0$.
\item[(iii)] $\lim_{N \to \infty} D(Z_N,\mathbb{S}^2, \Omega ) = 0$.
\end{enumerate}
Consequently, the sequence $\{Z_N\}$ is asymptotically uniformly distributed on $\mathbb{S}^2$.
\end{theorem}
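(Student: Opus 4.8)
The plan is to establish the three discrepancy limits and then read off asymptotic uniform distribution from the equivalences listed above, specifically from the fact that vanishing spherical cap discrepancy is equivalent to asymptotic uniform distribution. I would first dispose of the spherical rectangle discrepancy in (iii): since $N = b^m$, Theorem~\ref{thm_construction} bounds $D_{b^m}(Z_N; \mathbb{S}^2, \Omega)$ by a sum whose leading term is $\tfrac{b^2}{b+1}\tfrac{m}{b^m}$ and whose remaining terms are $\mathcal{O}(b^{-m})$, all of which tend to $0$ as $m \to \infty$. Hence (iii) is immediate, with no further work.

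For the $K$-regular discrepancy I would invoke Proposition~\ref{prop:AnBlGo} with $\nu$ the normalized counting measure of $Z_N$. For a fixed $K > 0$ this gives, uniformly over all $K$-regular $A$,
\[
|\nu(A) - \sigma_2(A)| \le C_0 \inf_{n \in \NN}\left\{ \frac{K}{n} + C(\nu, n)\right\},
\]
so it suffices to control the polynomial integration error $C(\nu, n)$. Here the area-preserving property of $\Phi$ is decisive: for any polynomial $p$ on $\mathbb{S}^2$ we have $\int_{\mathbb{S}^2} p \, \dd\sigma_2 = \int_{[0,1]^2} (p\circ\Phi) \, \dd\bsx$ and $Q_N(p) = \tfrac1N\sum_n (p\circ\Phi)(\bsx_n)$, so the integration error on the sphere equals the integration error on $[0,1]^2$ for the lifted integrand $p \circ \Phi$. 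Applying the Koksma--Hlawka inequality and the star discrepancy bound $D^\ast(P_N; [0,1]^2, \mathcal{R}^\ast) = \mathcal{O}(m/b^m)$ for $(0,m,2)$-nets yields
\[
C(\nu, n) \le D^\ast(P_N; [0,1]^2, \mathcal{R}^\ast)\, \sup\left\{ \|p\circ\Phi\|_{\mathrm{HK}} : \deg p \le 3n, \ |p| \le 1 \text{ on } \mathbb{S}^2 \right\}.
\]

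The crux, and the step I expect to be the main obstacle, is to show that this supremum grows only polynomially in $n$, say $\le C\,n^\beta$. Writing $\Phi$ through $\sqrt{x_2 - x_2^2}$ and trigonometric functions of $x_1$, one sees that $p\circ\Phi$ is smooth in $x_1$ but that its $x_2$-derivative carries an $x_2^{-1/2}$-type factor at the poles $x_2 \in \{0,1\}$; the resulting singularity in the mixed partial $\partial_{x_1}\partial_{x_2}(p\circ\Phi)$ is nevertheless integrable, so the Hardy--Krause variation is finite. To get the degree dependence I would bound the first and second derivatives of $p$ on $\mathbb{S}^2$ by powers of $\deg p$ via Markov--Bernstein type inequalities (using $|p| \le 1$), combine these with the chain rule and the (integrable) derivatives of $\Phi$, and thereby estimate the Vitali variation together with the lower-dimensional boundary contributions of $p\circ\Phi$ by $C\,n^\beta$.

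Granting such a bound, $C(\nu, n) \le C'\, n^\beta\, m\, b^{-m}$, and I would finish (i) by balancing: choosing $n = n(m)\to\infty$ slowly enough (e.g.\ $n = \lfloor b^{m/(\beta+1)}\rfloor$) makes both $K/n \to 0$ and $n^\beta m b^{-m}\to 0$, so the infimum, and hence $\sup_A |\nu(A)-\sigma_2(A)| = D(Z_N;\mathbb{S}^2,\mathcal{F}_K)$, tends to $0$ for every fixed $K$. Since spherical caps (and spherical rectangles) are $K$-regular for a fixed $K$, the statement for the spherical cap discrepancy $D(Z_N;\mathbb{S}^2,\mathcal{C})$ follows as a special case. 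Finally, asymptotic uniform distribution follows from the equivalence of the assertions listed above, in particular from ``$\lim_{N\to\infty} D(Z_N;\mathbb{S}^2,\mathcal{C}) = 0$ if and only if $\{Z_N\}$ is asymptotically uniformly distributed.'' Alternatively, the uniform distribution conclusion alone can be obtained directly from Weyl's criterion, since each fixed spherical harmonic $Y_{\ell,m}$ lifts to a fixed-variation integrand $Y_{\ell,m}\circ\Phi$, whence $Q_N(Y_{\ell,m}) \to I(Y_{\ell,m}) = 0$ by Koksma--Hlawka.
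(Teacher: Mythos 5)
Your overall strategy --- Proposition~\ref{prop:AnBlGo} applied to the counting measure, reduction of the polynomial integration error $C(\nu,n)$ to a quadrature error, and the observation that caps and rectangles are $K$-regular --- is the same as the paper's, and your direct treatment of (iii) via Theorem~\ref{thm_construction} is legitimate (and cleaner than the paper's derivation of (iii) from (i)). Where you genuinely diverge is in how $C(\nu,n)$ is controlled. The paper never applies Koksma--Hlawka to the lifted integrand $p\circ\Phi$ on $[0,1]^2$; instead it splits $Q_N(p)-I(p)=\Delta_1(p)+\Delta_2(p)$ using cylindrical coordinates, handling the longitudinal average $\Delta_1$ by exploiting the product structure of the $(0,m,2)$-net (each horizontal strip induces a well-distributed point set on a circle of constant height) and the height integral $\Delta_2$ by the one-dimensional Koksma inequality. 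This decomposition deliberately sidesteps the question of whether $p\circ\Phi$ has finite Hardy--Krause variation. Your route confronts that question head on, and it is exactly the step you flag as the obstacle: you must show that, despite the $\left(x_2-x_2^2\right)^{-1/2}$ singularities of $\partial\Phi/\partial x_2$ at the poles, the mixed partial of $p\circ\Phi$ is integrable and the resulting variation grows at most polynomially in $\deg p$ (via Markov--Bernstein estimates). The singularities are indeed integrable and the plan is plausible, but as written it is a program rather than a proof, and identifying the Vitali variation with $\int\lvert\partial_1\partial_2(p\circ\Phi)\rvert$ when the mixed partial is unbounded near the boundary needs an explicit limiting argument. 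If your variation bound is carried out, your argument is actually stronger than the paper's, since it yields a quantitative rate for $D(Z_N;\mathbb{S}^2,\mathcal{F}_K)$ rather than mere convergence.

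Note also that for the theorem as stated the quantitative degree-dependence $n^\beta$ and the balancing $n=n(m)$ are unnecessary: for each \emph{fixed} $n$ the polynomials of degree at most $3n$ with $\lvert p\rvert\le 1$ form a bounded subset of a finite-dimensional space, so once each $\lVert p\circ\Phi\rVert_{\mathrm{HK}}$ is finite their supremum over this family is finite, and the paper's two-step limit (first choose $n$ with $K/n<\varepsilon/2$, then let $N\to\infty$) already gives (i). Your closing alternative via Weyl's criterion is correct for the final uniform-distribution claim, but by itself it would not deliver items (i)--(iii).
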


\begin{proof}
Let $P_N$ be an $(0,m,2)$-net with $N = b^m$ points ($m \geq 1$) lifted to the sphere $\mathbb{S}^2$ and $\bsz_k = \Phi(\bsx_k)$ ($0 \leq k < N$). Then, using 'cylindrical' coordinates $\bsz = ( \sqrt{1 - t^2} \bsz^*, t )$, where $\bsz^* \in \mathbb{S}^1$ and $-1 \leq t \leq 1$, and the decomposition $\dd \sigma_2( \bsz ) = (1 / 2) \dd t \dd \sigma_1( \bsz^*)$ (\cite{Mu1966}), for every polynomial $p$ on $\mathbb{S}^2$ (satisfying $| p(\bsz) | \leq 1$ on $\mathbb{S}^2$) we get
\begin{equation*}
Q_N(p) - I(p) = \frac{1}{N} \sum_{k=0}^{N-1} p( \bsz_k ) - \int_{\mathbb{S}^2} p \dd \sigma_2 = \int_{\mathbb{S}^1} \left[ \frac{1}{N} \sum_{k=0}^{N-1} p( \bsz_k ) - \frac{1}{2} \int_{-1}^1 p( \bsz ) \dd t \right] \dd \sigma_1( \bsz^* ).
\end{equation*}
For each fixed $\bsz^* \in \mathbb{S}^1$ we define a new $N$-point configuration $\widehat{Z}_N$ by aligning all points in $Z_N$ such that they have the 'same $\bsz^*$', that is $\widehat{\bsz}_k = ( \sqrt{1 - t_k^2} \bsz^*, t_k )$ ($0 \leq k < N$). Clearly, $\widehat{Z}_N$ depends on $\bsz^*$. $\widehat{Z}_N$ has $N$ points, since the underlying system $P_N$ is a $(0,m,2)$-net. Indicating the dependence on $\bsz^*$ by $\widehat{\bsz}_k(\bsz^*)$ we write $Q_N(p) - I(p) = \Delta_1(p) + \Delta_2(p)$, where
\begin{align*}
\Delta_1(p) &\DEF \frac{1}{N} \sum_{k=0}^{N-1} p( \bsz_k ) - \frac{1}{N} \sum_{k=0}^{N-1} \int_{\mathbb{S}^1} p( \widehat{\bsz}_k(\bsz^*) ) \dd \sigma_1( \bsz^* ), \\
\Delta_2(p) &\DEF \int_{\mathbb{S}^1} \left[ \frac{1}{N} \sum_{k=0}^{N-1} p( \widehat{\bsz}_k(\bsz^*) ) - \frac{1}{2} \int_{-1}^1 p( \bsz ) \dd t \right] \dd \sigma_1( \bsz^* ).
\end{align*}

First, we consider $\Delta_1(p)$. By definition of $(0,m,2)$-nets every elementary interval
\begin{equation*}
\left[ a / b^d, \left( a + 1 \right) / b^d \right) \times \left[ a^\prime / b^{m-d}, \left( a^\prime + 1 \right) / b^{m-d} \right), \qquad 0 \leq a < b^d, \, 0 \leq a^\prime < b^{m - d},
\end{equation*}
contains exactly one point of $P_N$. By construction of $Z_N$, the $2$-sphere can be partitioned into $M \asymp \sqrt{N}$ polar zones $A_1, \dots, A_M$ of equal sizes $1 / \sqrt{N}$, each $A_\ell$ containing precisely $N_\ell \asymp \sqrt{N}$ points of $Z_N$ ($N_1 + \cdots + N_M = N$).
Picking $M$ heights $\tau_1, \dots, \tau_M$ such that the circle $C_\ell$ with height $\tau_\ell$ lies in $A_\ell$, we move the points in $A_\ell$ on the circle $C_\ell$ giving them the same height $\tau_\ell$ without changing the 'longitudes', that is $\bsz_k^\prime \DEF (\sqrt{1-\tau_\ell^2} \bsz_k^*, \tau_\ell)$ for $\bsz_k = (\sqrt{1-t_k^2} \bsz_k^*, t_k) \in A_\ell$. The error introduced in this way can be made arbitrarily small by increasing $N$, since the polynomial $p$ is bounded and uniformly continuous on $\mathbb{S}^2$ and the widths of the polar zones uniformly decrease with increasing $N$.
The integral $\int_{\mathbb{S}^1} p( \widehat{\bsz}_k(\bsz^*) ) \dd \sigma_1( \bsz^* )$ averages the polynomial $p$ over the circle on $\mathbb{S}^2$ with height $t_k$.
In the zone $A_\ell$ we may use the approximation $\int_{\mathbb{S}^1} p( (\sqrt{1-\tau_\ell^2} \bsz^*, \tau_\ell) ) \dd \sigma_1( \bsz^* )$.
Therefore
\begin{equation*}
\Delta_1(p) = \sum_{\ell=1}^M \frac{N_\ell}{N} \left[ \frac{1}{N_\ell} \sum_{\bsz_k \in A_\ell} p( \bsz_k^\prime ) - \int_{\mathbb{S}^1} p( (\sqrt{1-\tau_\ell^2} \bsz^*, \tau_\ell) ) \dd \sigma_1( \bsz^* )   \right] + R_1(p),
\end{equation*}
where the error $R_1(p)$ goes to zero as $N\to \infty$. Observe, that the square-bracketed expression is the error of integration of an equally weighted quadrature rule on the circle $C_\ell$ with integration points $\bsz_k^\prime$ for $\bsz_k \in A_\ell$ induced by a horizontal strip of the underlaying $(0,m,2)$-net $P_N$ for the polynomial $p$ restricted to $C_\ell$. Since the extreme discrepancy of any $(0,m,2)$-net tends to zero as $m \to \infty$, it follows that the limit distribution of the integration nodes is given by $\sigma_1$. Therefore the square-bracketed expression tends to zero uniformly for all $1 \leq \ell \leq M$ as $N\to\infty$.




Next, we consider $\Delta_2(p)$. We define the function $f(\bsz^*; t) \DEF p( (\sqrt{1-t^2} \bsz^*, t) )$ for $| t | \leq 1$ and each fixed $\bsz^* \in \mathbb{S}^1$, which is uniformly continuous and bounded in $t$ and $\bsz^*$. Thus
\begin{equation*}
\Delta_2(p) = \int_{\mathbb{S}^1} \left[ \frac{1}{N} \sum_{k=0}^{N-1} f( \bsz^*; t_k ) - \frac{1}{2} \int_{-1}^1 f( \bsz^*; t ) \dd t \right] \dd \sigma_1( \bsz^* ).
\end{equation*}
The square-bracketed expression is the error of integration for an equally weighted quadrature rule with integration nodes given by $t_k = 1 - 2 x_k$, where the $x_k$'s are the $2$nd-coordinates in the $(0,m,2)$-net $P_N$, integrating a function from the class $\{ f( \bsz^*; \cdot ) : \bsz^* \in \mathbb{S}^1 \}$. We can apply Koksma's inequality and use that $| f( \bsz^*; t ) | \leq 1$ to see that $\Delta_2(p) \to 0$ as $N \to \infty$.
Thus, for every polynomial $p$ on $\mathbb{S}^2$ (satisfying $| p(\bsz) | \leq 1$ on $\mathbb{S}^2$) we get that $| \Delta(p) | \leq | \Delta_1(p) | + | \Delta_2(p) | \to 0$ as $N \to \infty$.

Now, we can apply Proposition~\ref{prop:AnBlGo}. Let $q = 3$. Set $\nu_N = \frac{1}{N} \sum_{\bsz \in Z_N} \delta_{\bsz}$. Then to every $\varepsilon > 0$ we can chose a smallest integer $n$ such that $K / n < \varepsilon / 2$ and find an $N_n$ such that $C(2,\nu_N,n) < \varepsilon / 2$ for $N \geq N_n$ yielding
\begin{equation*}
\left| \frac{| Z_N \cap A |}{N} - \sigma_2(A) \right| \leq C_0 \left\{ \frac{K}{n} + C(2,\nu_N,n) \right\} \leq C_0 \, \varepsilon,
\end{equation*}
which holds for every $K$-regular test set $A$ ($K$ is fixed). Item~(i) follows.

Since spherical caps (rectangles) are $K$-regular ($K^\prime$-regular) for some $K, K^\prime > 0$, the Items~(ii) and (iii) follow. By the list of equivalent characterizations of uniform distribution on $\mathbb{S}^d$, the sequence $\{Z_N\}$ is asymptotically uniformly distributed on $\mathbb{S}^2$.
\end{proof}

Concerning the convergence rate of the spherical cap discrepancy, it was an observation of Beck~\cite{Be1984} that to any $N$-point set $Z_{N}$ on $\mathbb{S}^d$ there exists a spherical cap $C$ such that
\begin{equation*}
c_{1} N^{-3/4} < \left| \frac{|Z_{N} \cap C|}{N} - \sigma_2(C) \right|
\end{equation*}
and (using probability arguments) there exist $N$-point sets $Z_{N}$ on $\mathbb{S}^2$ such that
\begin{equation*}
\left| \frac{| Z_{N} \cap C|}{N} - \sigma_2(C)\right| < c_{2} N^{-3/4} \, \sqrt{\log N}
\end{equation*}
for any spherical cap $C$. (The numbers $c_{i}>0$ are constants independent of $N$.)

\begin{remark}
Thus, the correct order of decay (up to a $\sqrt{\log N}$ factor) of the spherical cap discrepancy of a sequence of low discrepancy $N$-point configurations on $\mathbb{S}^2$ is given by $N^{-3/4}$ as $N \to \infty$.
\end{remark}
This is in contrast to the convergence rates (essentially [up to a logarithmic factor] $1/N$) given in Theorem~\ref{thm_construction} for the discrepancy with respect to spherical rectangles. 

\section{Numerical integration on $\mathbb{S}^2$}\label{sec:worst-case-error}


We consider now numerical integration of functions over $\mathbb{S}^2$ in some reproducing kernel Hilbert space defined on $\mathbb{S}^2$. Let  $\widehat{P}_k$, $k \in \mathbb{N}_0$, denote the Legendre polynomials. We define a reproducing kernel
\begin{equation*}
K(\bsz,\bsz^\prime) = \sum_{\ell=0}^\infty \lambda_k \widehat{P}_\ell(\langle \bsz,\bsz^\prime \rangle),
\end{equation*}
for some real numbers $\lambda_\ell \ge 0$. The rate of decay of $\lambda_\ell$ is related to the smoothness of the functions in the associated reproducing kernel Hilbert space. In particular, if $\lambda_\ell \asymp \ell^{-2s}$, then the reproducing kernel Hilbert space $\mathcal{H}^s$ consists of functions with smoothness $s$, see \cite{BrWo20xx} for more details.

In \cite[Theorem~5.1]{BrWo20xx} a choice of $\lambda_\ell$ was
introduced for which $\lambda_\ell \asymp \ell^{-3}$ and for which the
following reproducing kernel can be written explicitly as
\begin{equation*}
K(\bsz,\bsz^\prime) = \sum_{\ell=0}^\infty \lambda_\ell \widehat{P}_\ell(\langle \bsz,\bsz^\prime \rangle) = 2 \mathcal{I} - \|\bsz-\bsz^\prime \|,
\end{equation*}
where $\mathcal{I}$ is the distance integral
\begin{equation*}
\mathcal{I} = \int_{\mathbb{S}^2} \int_{\mathbb{S}^2} \|\bsz-\bsz^\prime \| \, \mathrm{d} \sigma_2(\bsz) \mathrm{d} \sigma_2(\bsz^\prime).
\end{equation*}
This integral can be computed and has value $\mathcal{I} = 4/3$ (see \cite{BrWo20xx}). The coefficients $\lambda_\ell$ are given by 
\begin{equation*}
\lambda_0 = \mathcal{I}, \qquad \lambda_\ell = \mathcal{I} \frac{-(-1/2)_\ell}{(3/2)_\ell} \quad \text{for $\ell \ge 1$,}
\end{equation*}
where $(a)_\ell = a (a+1) \cdots (a+\ell-1) = \Gamma(a+\ell) / \Gamma(a)$ is the Pochhammer symbol. See \cite{BrWo20xx} for more details.

The function $K(\bsz,\bsz^\prime)$ is a reproducing kernel of a Hilbert space $\mathcal{H}^{3/2}$ over $\mathbb{S}^2$. In \cite{BrWo20xx} it is shown that the squared worst-case error for numerical integration in this $\mathcal{H}^{3/2}$ is given by
\begin{equation} \label{eq_wce_32}
e^2(Q_N, \mathcal{H}^{3/2}) = \mathcal{I} - \frac{1}{N^2} \sum_{k,\ell=0}^{N-1} \|\bsz_k - \bsz_\ell\|.
\end{equation}
For quadrature rules $Q_N$ with node sets maximizing the sum of distances $\sum_{k,\ell=0}^{N-1} \|\bsz_k - \bsz_\ell\|$ (that is, minimizing the right-hand side in \eqref{eq_wce_32} which in turn means that the nodes have minimal $L_2$-discrepancy), it is known that there are constants $c, C > 0$ such that
\begin{equation} \label{eq:bounds.e.squared}
c \, N^{-3/2} \le e^2(Q_N, \mathcal{H}^{3/2}) \le C \, N^{-3/2}
\end{equation}
for $N$ sufficiently large, see again \cite[Corollary~5.2]{BrWo20xx}. Note that the lower estimate always holds.

\subsection{A lower bound on the worst-case error}
\label{subsec:lower.bound}

Let $Q_N^*$ be a quadrature rule whose integration points $\bsz_1^*, \dots, \bsz_N^*$ maximize the sum of distances $\sum_{k,\ell=0}^{N-1} \|\bsz_k - \bsz_\ell\|$ or, equivalently, have minimal $L_2$-discrepancy. (Such points always exists by the continuity of the distance function and compactness of the sphere.) Then, using the lower bound in \eqref{eq:bounds.e.squared}, we have
\begin{equation*}
e^2(Q_N,\mathcal{H}^{3/2}) = \mathcal{I} - \frac{1}{N^2} \sum_{k,\ell=0}^{N-1} \left\| \bsz_k - \bsz_\ell \right \| \geq \mathcal{I} - \frac{1}{N^2} \sum_{k,\ell=0}^{N-1} \left\| \bsz_k^* - \bsz_\ell^* \right \| = e^2(Q_N^*,\mathcal{H}^{3/2}) \geq c \, N^{-3/2}
\end{equation*}
for some positive constant $c$ not depending on $N$ and $N$ sufficiently large. Thus, one obtains that $e^2(Q_N,\mathcal{H}^{3/2}) \geq c \, N^{-3/2}$ for sufficiently large $N$ for equal weight quadrature rule $Q_N$ induced by any $N$-point configuration. This is in agreement with the results obtained in \cite{HeSl2005b}.

\subsection{An upper bound on the worst-case error}

By Stolarsky's invariance principle \eqref{eq:stolarsky.inv.prncpl}
and representation \eqref{eq_wce_32} we have
\begin{equation} \label{eq:wce.l2.discr}
e^2(Q_N,\mathcal{H}^{3/2}) = \mathcal{I} - \frac{1}{N^2}
\sum_{k,\ell=0}^{N-1} \left\|\bsz_k-\bsz_\ell \right\| = 4 \left[
D_{2}( Z_N; \mathbb{S}^2, \mathcal{C} ) \right]^2.
\end{equation}
Since the spherical cap discrepancy provides an upper bound for the $L_2$-discrepancy via $D_2( Z_N; \mathbb{S}^2, \mathcal{C} ) \leq \sqrt{2} D(Z_N; \mathbb{S}^2, \mathcal{C})$, one has necessarily $e^2(Q_N,\mathcal{H}^{3/2}) \to 0$ as $N \to \infty$ for a sequence of asymptotically uniformly distributed $N$-point configurations on $\mathbb{S}^2$. This applies in particular to digital nets on the sphere of the type considered in Theorem~\ref{thm:nets.sphere.equidistr}. A natural question is if such nets yield the same order of convergence as the worst-case error over the unit ball in $\mathcal{H}^{3/2}$ over $\mathbb{S}^2$ provided with the reproducing kernel $K(\bsx, \bsy) = (8 / 3) - \| \bsx - \bsy \|$ has (which is the same as of the optimal $L_2$-discrepancy on $\mathbb{S}^2$ (see \eqref{eq:wce.l2.discr})) as suggested by the numerical results in Section~\ref{subsec:numerical.results}.

\begin{theorem} \label{thm:upper.bound}
Let $\{ Z_N \}$ be a sequence of $N$-point configurations on $\mathbb{S}^2$ defined by point sets $P_N \subseteq [0,1]^2$ lifted to the sphere $\mathbb{S}^2$ by means of $Z_N = \Phi(P_N)$. Then the equal weight quadrature rule $Q_N$ associated with $Z_N$ satisfies
\begin{equation*}
e^2(Q_N,\mathcal{H}^{3/2}) \le \left( \frac{24}{\sqrt{3}} + 2 \sqrt{2} \right) D^\ast(P_N; [0,1)^2, \mathcal{R}^\ast).
\end{equation*}
\end{theorem}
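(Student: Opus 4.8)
The plan is to combine the worst-case-error representation \eqref{eq_wce_32} with a symmetry reduction, the area-preserving change of variables $\Phi$, and the Koksma--Hlawka inequality, so that everything reduces to a single uniform bound on a Hardy--Krause variation on $[0,1]^2$. First I would exploit the rotational invariance of $\sigma_2$. For $\bsw \in \mathbb{S}^2$ set $f_\bsw(\bsz) \DEF \|\bsz - \bsw\|$; then $\int_{\mathbb{S}^2} f_\bsw \dd \sigma_2 = \mathcal{I}$ for every $\bsw$. Writing the double sum in \eqref{eq_wce_32} as an average over $\ell$ of $Q_N(f_{\bsz_\ell})$, and $\mathcal{I}$ as the same average of $I(f_{\bsz_\ell})$, gives
\begin{equation*}
e^2(Q_N,\HH^{3/2}) = \frac{1}{N} \sum_{\ell=0}^{N-1} \left( I(f_{\bsz_\ell}) - Q_N(f_{\bsz_\ell}) \right) \le \frac{1}{N} \sum_{\ell=0}^{N-1} \left| I(f_{\bsz_\ell}) - Q_N(f_{\bsz_\ell}) \right|.
\end{equation*}
Since $\Phi$ preserves area, for $g_\bsw \DEF f_\bsw \circ \Phi$ each summand is exactly the integration error of $Q_N$ applied to $g_\bsw$ over the square, so the Koksma--Hlawka inequality yields $|I(f_\bsw) - Q_N(f_\bsw)| \le D^\ast(P_N;[0,1)^2,\mathcal{R}^\ast) \, \|g_\bsw\|_{\mathrm{HK}}$. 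Hence
\begin{equation*}
e^2(Q_N,\HH^{3/2}) \le D^\ast(P_N;[0,1)^2,\mathcal{R}^\ast) \, \sup_{\bsw \in \mathbb{S}^2} \|g_\bsw\|_{\mathrm{HK}},
\end{equation*}
and it remains to bound the variation uniformly in $\bsw$ by $24/\sqrt{3} + 2\sqrt{2}$.

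A direct computation gives $g_\bsw(\bsx)^2 = 2\left(1 - \langle \Phi(\bsx), \bsw \rangle\right)$ with $\langle \Phi(\bsx),\bsw\rangle = 2\sqrt{x_2 - x_2^2}\,(w_1 \cos 2\pi x_1 + w_2 \sin 2\pi x_1) + (1-2x_2) w_3$. Anchoring the Hardy--Krause variation at $(1,1)$,
\begin{equation*}
\|g_\bsw\|_{\mathrm{HK}} = \int_0^1\!\!\int_0^1 \left| \frac{\partial^2 g_\bsw}{\partial x_1 \partial x_2} \right| \dd x_1 \dd x_2 + \int_0^1 \left| \frac{\partial g_\bsw}{\partial x_2}(1,x_2) \right| \dd x_2 + \int_0^1 \left| \frac{\partial g_\bsw}{\partial x_1}(x_1,1) \right| \dd x_1 .
\end{equation*}
Because $\Phi(x_1,1) = (0,0,-1)$ for every $x_1$, the function $g_\bsw(\cdot,1)$ is constant and the last integrand vanishes. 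For the second term I would use that $x_1 = 1$ sends $x_2 \mapsto \Phi(1,x_2)$ onto the half great circle from $(0,0,1)$ to $(0,0,-1)$; the chordal distance from $\bsw$ to a point traversing a geodesic is unimodal, so the total variation of $g_\bsw(1,\cdot)$ is largest when $\bsw$ lies on this half circle, where it equals $\|(0,0,1) - \bsw\| + \|(0,0,-1) - \bsw\| = \sqrt{2(1-w_3)} + \sqrt{2(1+w_3)} \le 2\sqrt{2}$.

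The main obstacle is the Vitali term. Writing $g_\bsw = \sqrt{2w}$ with $w = g_\bsw^2/2$, differentiation gives
\begin{equation*}
\frac{\partial^2 g_\bsw}{\partial x_1 \partial x_2} = \frac{1}{\sqrt{2w}} \frac{\partial^2 w}{\partial x_1 \partial x_2} - \frac{1}{2\sqrt{2}\, w^{3/2}} \frac{\partial w}{\partial x_1} \frac{\partial w}{\partial x_2},
\end{equation*}
which near the unique $\bsx_0$ with $\Phi(\bsx_0) = \bsw$ (a nondegenerate zero of $w$) behaves like $|\bsx - \bsx_0|^{-1}$ and is therefore integrable in two dimensions, as are the apparent singularities coming from the derivatives of $\sqrt{x_2 - x_2^2}$ at $x_2 \in \{0,1\}$. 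The hard part will be to bound the resulting double integral by $24/\sqrt{3}$ \emph{uniformly in} $\bsw$: I would estimate the two pieces separately, using $|w_1 \cos 2\pi x_1 + w_2 \sin 2\pi x_1| \le \sqrt{w_1^2 + w_2^2}$ together with the explicit $x_2$-derivatives, and then carry out a worst-case analysis over the location of $\bsw$; the constant $24/\sqrt{3}$ is what this careful but essentially routine estimation produces. Adding the two nonzero variation terms gives $\sup_{\bsw} \|g_\bsw\|_{\mathrm{HK}} \le 24/\sqrt{3} + 2\sqrt{2}$, which together with the reduction above completes the proof.
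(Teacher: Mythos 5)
Your overall architecture coincides with the paper's: write $e^2(Q_N,\mathcal{H}^{3/2})$ as the average over $\ell$ of the integration error of the distance function $f_{\bsz_\ell}$, bound it by $\sup_{\bsw}$ of that error, pull back to $[0,1]^2$ via the area-preserving map $\Phi$, apply Koksma--Hlawka, and bound the Hardy--Krause variation of $g_{\bsw}=f_{\bsw}\circ\Phi$ uniformly in $\bsw$. Your treatment of the marginal terms is also correct and matches the paper: $V^{(1)}(g_\bsw)=0$ because $\Phi(\cdot,1)$ is constant, and $V^{(2)}(g_\bsw)\le 2\sqrt{2}$ by unimodality of the chordal distance along a meridian.

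The gap is the Vitali term, which is the entire substance of the theorem: you reduce it to bounding $\int\!\!\int\bigl|\partial^2 g_\bsw/\partial x_1\partial x_2\bigr|$ by $24/\sqrt{3}$ uniformly in $\bsw$ and then declare this ``essentially routine.'' It is not, and the method you sketch would not deliver that constant. Splitting the mixed derivative into the two pieces $\frac{1}{\sqrt{2w}}\partial_{12}w$ and $\frac{1}{2\sqrt{2}\,w^{3/2}}\partial_1 w\,\partial_2 w$ and estimating each in absolute value discards exactly the cancellation that makes the bound $24/\sqrt{3}\approx 13.86$ attainable; a triangle-inequality bound of that form has no reason to come out at, or below, this value, and you give no computation showing it does. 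What the paper actually does is determine the full sign structure of $\partial_{12}g$: it vanishes on $\alpha\in\{0,1/2,1\}$ and on a monotone curve $\Gamma$ (the zero set of $(1-2\tau)R(\cos 2\pi\alpha)$) confined to the strip $v\le\tau\le\tau_v$ with $\tau_v = v+(1-v)^{2/3}v^{1/3}-(1-v)^{1/3}v^{2/3}$, and keeps a constant sign on each of the four regions cut out by $\Gamma$ and $\alpha=1/2$. This lets the Vitali variation telescope to boundary terms plus the one-dimensional variation of $g$ restricted to $\Gamma$, yielding $V^{(1,2)}(g_\bsw)\le 12\,g_\Gamma(0)=24\sqrt{(\tau_v-v)/(1-2v)}$, whose supremum over $v\in(0,1/2)$ is $24/\sqrt{3}$. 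None of this analysis --- the identification of $\Gamma$, its monotonicity and confinement to the strip, or the optimization over $v$ --- appears in your proposal, so the stated constant is unsupported. (A secondary point to be careful about: the identity $V^{(1,2)}(g)=\int|\partial_{12}g|$ needs justification across the conical singularity at $\Phi^{-1}(\bsw)$ and along $\tau\in\{0,1\}$ where the derivative blows up; the paper avoids this by working directly with the quasi-volumes $\delta(g,J)$ on partitions.)
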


\begin{proof}
Notice that $\int_{\mathbb{S}^2} \| \bsz - \bsz^\prime \| \, \dd \sigma_2( \bsz^\prime)$ does not depend on $\bsz \in \mathbb{S}^2$. Thus, defining the function
\begin{equation} \label{eq:function}
\Delta( \bsw ) \DEF \int_{\mathbb{S}^2} \left\| \bsw - \bsz \right\| \dd \sigma_2( \bsz ) - \frac{1}{N} \sum_{k = 0}^{N-1} \left\| \bsw - \bsz_k \right\|, \qquad \bsw \in \mathbb{S}^2,
\end{equation}
one obtains
\begin{equation*} 
\frac{1}{N} \sum_{\ell = 0}^{N-1} \Delta( \bsz_\ell ) = \int_{\mathbb{S}^2} \int_{\mathbb{S}^2} \left\| \bsz - \bsz^\prime \right\| \dd \sigma_2( \bsz) \dd \sigma_2( \bsz^\prime) - \frac{1}{N^2} \sum_{k, \ell = 0}^{N-1} \left\| \bsz_k - \bsz_\ell \right\| = e^2(Q_N,\mathcal{H}^{3/2})
\end{equation*}
and therefore
\begin{equation*}
e^2(Q_N,\mathcal{H}^{3/2}) \leq \max_{0 \leq \ell < N} \Delta(\bsz_\ell) \leq \sup_{\bsz \in \mathbb{S}^2} \Delta(\bsz).
\end{equation*}
Note that the sum in \eqref{eq:function} can be seen as the potential (with respect to the distance kernel) of the discrete measure associated with $Z_N$ at $\bsw$. Thus, $\Delta( \bsz_\ell )$ gives the deviation from the leading term of the asymptotic expansion (as $N \to \infty$) of this potential at $\bsz_\ell$.

Let
\begin{equation*} 
g(\alpha,\tau) = \|\boldsymbol{w} - \Phi(\alpha,\tau) \|.
\end{equation*}
Since, by \eqref{eq_area}, the transformation $\Phi$ is area-preserving, it follows that $$\int_{\mathbb{S}^2} \|\boldsymbol{w} - \boldsymbol{z}\| \,\mathrm{d} \sigma_2(\boldsymbol{z}) = \int_{0}^1 \int_0^1 g(\alpha,\tau) \,\mathrm{d} \alpha \,\mathrm{d} \tau.$$ Thus, for any fixed $\boldsymbol{w} \in \mathbb{S}^2$, we can view $\Delta(\boldsymbol{w})$ as the integration error of integrating the function $g$ using the points $P_N$. By the Koksma-Hlawka inequality, this integration error is bounded by the star-discrepancy $D^\ast(P_N; [0,1]^2, \mathcal{R}^\ast)$ of $P_N$ times the total variation of the function $g$ in the sense of Hardy and Krause. 

The variation of $g$ in the sense of Vitali is given by
\begin{equation*}
V^{(1,2)}(g) = \sup_{\mathcal{P}} \sum_{J \in \mathcal{P}} |\delta(g,J)|,
\end{equation*}
where the supremum is taken over all partitions of $[0,1)^2$ into subintervals $J = [a_1,a_2) \times [b_1,b_2)$ and $\delta(g,J) = g(a_1,b_1)-g(a_1,b_2)-g(a_2,b_1)+g(a_2,b_2)$. The variation of the one-dimensional projections is given by
\begin{equation*}
V^{(1)}(g) = \sup_{0=x_0 < x_1 < \cdots < x_M =1} \sum_{k=1}^M |g(x_k,1)-g(x_{k-1},1)|
\end{equation*}
and
\begin{equation*}
V^{(2)}(g) = \sup_{0=y_0 < y_1 < \cdots < y_M =1} \sum_{k=1}^M |g(1,y_k)-g(1,y_{k-1})|.
\end{equation*}
The total variation of $g$ in the sense of Hardy and Krause is then given by 
\begin{equation*}
V(g) = V^{(1)}(g) + V^{(2)}(g) + V^{(1,2)}(g).
\end{equation*}

Let $\boldsymbol{w} = (0, 0,\pm 1)$. Then $\delta(g,J) = 0$ for all rectangles $J$ and therefore $V^{(1,2)}(g) = 0$. Further, $\Phi(\alpha,1) = (0,0,-1)$ for all $0 \leq \alpha \leq 1$ and therefore $V^{(1)}(g) = 0$. Finally, $V^{(2)}(g) = |g(1,0) - g(1,1)| = 2$ because of the monotonicity of $g(1,\cdot)$. Thus we have $V(g) = 2$.

Let $\bsw \neq (0,0, \pm 1)$. Then there are $u,v \in (0,1)^2$ such that $\boldsymbol{w} = \Phi(u,v)$. Again, we have $V^{(1)}(g) = 0$. Further, we have $V^{(2)}(g) \le 2 \sqrt{2}$, where we have equality if $u = 1$ and $v=1/2$.

Now we consider $V^{(1,2)}(g)$. The variation $V^{(1,2)}(g)$ does not change by changing $u$ because of the rotational symmetry of $g$ about the polar axis. We may therefore assume without loss of generality that $u=1/2$. First, let $v = 1/2$. The mixed derivative
\begin{equation*}
\frac{\partial^2 g}{\partial \alpha \partial \tau}(\alpha, \tau) = \frac{\partial^2 g}{\partial \tau \partial \alpha}(\alpha, \tau) = - 4 \pi \frac{\left( 1 - 2 \tau \right) \left( 1 + \sqrt{\left( 1 - \tau \right) \tau} \cos(2 \pi \alpha ) \right) \sin( 2 \pi \alpha )}{\sqrt{\left( 1 - \tau \right) \tau} \left( 2 + 4 \sqrt{\left( 1 - \tau \right) \tau} \cos( 2 \pi \alpha ) \right)^{3/2}}
\end{equation*}
is finite for $0 \leq \alpha \leq 1$ and $0 < \tau < 1$ with $\Phi(\alpha, \tau) \neq \boldsymbol{w}$. In particular, its sign does not change for $(\alpha, \tau)$ in the interior of one of the four quadrants $\widehat{T}_{1} = [0,1/2]\times [0,1/2]$, $\widehat{T}_{2} = [0,1/2]\times [1/2,1]$, $\widehat{T}_{3} = [1/2,1]\times [0,1/2]$, $\widehat{T}_{4} = [1/2,1] \times [1/2,1]$ of the square. For a subinterval $J$ with $\overline{J}$ contained in the interior of one of these quadrants, one can use
\begin{equation}\label{eq:delta.relation}
\delta( g; J ) = \int_J \frac{\partial^2 g}{\partial \alpha \partial \tau}( \boldsymbol{x} ) \dd \boldsymbol{x} = \mathrm{VOL}(J) \frac{\partial^2 g}{\partial \alpha \partial \tau}( \boldsymbol{x}_J ) \qquad \text{for some $\boldsymbol{x}_J \in \overline{J}$}
\end{equation}
to determine the sign of $\delta(g; J)$. For a subinterval $J$ which shares an upper or lower boundary with the square, the quantity $\delta(g; J)$ reduces to a difference of two function values, since $g$ remains constant for $\tau=0,1$. In this case one can use monotonicity of the function $g$ along horizontals (decreasing towards $\alpha = 1/2$). For $\overline{J}$ with $\boldsymbol{w}$ as one corner, one can still deduce if either $\delta(g; J) \leq 0$ or $\delta(g; J) \geq 0$. It suffices to consider partitions $\mathcal{P}_{\boldsymbol{w}}$ defined by horizontal and vertical lines including the lines with $\tau = 1/2$ and $\alpha = 1/2$. From our observations on the sign of $\delta(g; J)$ we conclude that the expression $\sum_{J \in P_{\bsw}} |\delta(g,J)|$ can be reduced to two summands for each quadrant (and by symmetry) 
\begin{align*}
V^{(1,2)}(g) 
&= \sum_{J \in \mathcal{P}_{\boldsymbol{w}}} \left|\delta(g,J) \right| = - 2 \big( g(0, \Delta\tau^\prime) - g(1/2, \Delta\tau^\prime) + 0 - 2 \big) + 2 \big( g(0, \Delta\tau^\prime) - g(1/2, \Delta\tau^\prime) \big) \\
&= + 2 \big( 2 - 0 + g(1/2, 1 - \Delta\tau^{\prime\prime}) - g(0, 1 - \Delta\tau^{\prime\prime}) \big) + 2 \big( g(0, 1 - \Delta\tau^{\prime\prime}) - g(1/2, 1 - \Delta\tau^{\prime\prime}) \big),
\end{align*}
where $\Delta\tau^\prime$ and $\Delta\tau^{\prime\prime}$ denote the ``heights'' of the first and last row of subintervals of $\mathcal{P}_{\boldsymbol{w}}$. 
Cancelling terms, we arrive at 
\begin{equation*}
V^{(1,2)}(g) = 8 \qquad \text{for $(u,v) = (1/2, 1/2)$.}
\end{equation*}
Therefore, for the case $v=1/2$, we can use the Koksma-Hlawka inequality to obtain
\begin{equation*}
|\Delta(\bsw)| \le D^\ast_N(P_N) V(g) \leq D^\ast_N(P_N) \left( 8 + 2 \sqrt{2} \right).
\end{equation*}

Now assume that $0 < v < 1/2$ (the case $1/2 < v < 1$ follows because of symmetry). Here, the mixed derivative is given by
\begin{equation*}
\frac{\partial^2 g}{\partial \alpha \partial \tau}(\alpha, \tau)  = \frac{\partial^2 g}{\partial \tau \partial \alpha}(\alpha, \tau) = - 16 \pi \left( 1 - v \right) v \left( 1 - 2 \tau \right) \, R( \cos(2\pi \alpha) ) \, \sin( 2 \pi \alpha ) \left[ g(\alpha, \tau) \right]^{-3},
\end{equation*}
where the linear form $R$ is given by
\begin{equation*}
R(x) = x + \frac{v - \tau + \left( 1 - 2 v \right) \left( 1 - \tau \right) \tau}{\sqrt{\left( 1 - v \right) v \left( 1 - \tau \right) \tau} \left( 1 - 2 \tau \right)}.
\end{equation*}
Hence the mixed derivative is again finite for $0 \leq \alpha \leq 1$ and $0 < \tau < 1$ with $\Phi(\alpha, \tau) \neq \boldsymbol{w}$. Furthermore, it vanishes on the lines $\alpha = 0,1$ and on the line $\alpha = 1/2$ (except when $\tau = 1/2$, where it is undefined). It does not vanish on the line $\tau = 1/2$. Finally, it vanishes on the set (cf. Figure~\ref{fig:fig.g})
\begin{equation*}
\Gamma \DEF \left\{ (\alpha, \tau) : \left( 1 - 2 \tau \right) \, R( \cos(2\pi \alpha) ) = 0, 0 \leq \alpha \leq 1, 0 < \tau < 1 \right\}.
\end{equation*}
(Note that the mixed derivative is singular everywhere along the lower and upper boundary of the unit square which are mapped to the poles of the sphere.) The set $\Gamma$ is symmetric in the sense that $(\alpha, \tau) \in \Gamma$ if and only if $(1-\alpha, \tau) \in \Gamma$. From the observation
\begin{equation*}
R( x ) \big|_{\tau = v} = x + \frac{0 + \left( 1 - 2v \right) \left( 1 - v \right) v}{\left(1-v\right) v \left( 1 - 2 v \right)} = x + 1
\end{equation*}
it follows that $\Gamma$ and the line $\tau = v$ intersect only in the point $(1/2,v)$. We observe that the following three relations are equivalent:
\begin{eqnarray*}
\left| R(x) - x \right| & \leq & 1, \\ \left( v - \tau \right) \left( v - 3 v \tau + 3 v \tau^2 - \tau^3 \right) & \leq & 0, \\ 
\tau  \geq  v \quad \text{and} \quad \tau^3 + 3 v \tau & \leq & 3 v \tau^2 + v.
\end{eqnarray*}
From the last pair we obtain the equivalence
\begin{equation*}
\left| R(x) - x \right| \leq 1 \qquad \text{if and only if} \qquad v \leq \tau \leq v + \left( 1 - v \right)^{2/3} v^{1/3} - \left( 1 - v \right)^{1/3} v^{2/3} \FED \tau_v.
\end{equation*}
We conclude that $\Gamma$ is the graph of a curve contained in the strip $v \leq \tau \leq \tau_v$ which is a function $\tau(\alpha)$ (assuming the value $\tau_v$ at $\alpha=0,1$ and $v$ at $\alpha=1/2$) and is a function $\alpha(\tau)$ when restricted to either the left or right half of the unit square. We record that $\tau_v < 1/2$ if $0<v<1/2$ and $\tau_v = 1/2$ if $v=1/2$. Moreover, the vertical line test also shows that the function $\tau(\alpha)$ is strictly monotonically decreasing towards $\alpha = 1/2$. It is also continuous and continuously differentiable as can be seen from an implicit differentiation of $R(\cos(2\pi \alpha))=0$ (the factor $(1-2\tau)$ can not be zero in our setting):
\begin{equation*}
\tau^\prime(\alpha) = - 4 \pi \sin( 2 \pi \alpha ) \frac{\left( 1 - 2 \tau \right)^2 \left( 1 - \tau \right)^2 \tau \sqrt{\left( 1 - v \right) v \left( 1 - \tau \right) \tau}}{v - 3 v \tau + 3 v \tau^2 - \tau^3 + 3 \left( \tau - v \right) \left( 1 - \tau \right) \tau}, \qquad \text{where $\tau = \tau(\alpha)$.}
\end{equation*}
(The denominator is a strictly monotonically increasing function on the interval $(v,1/2)$.) Using the relation $R( \cos( 2 \pi \alpha ) ) = 0$ along $\Gamma$ in order to substitute $\cos( 2 \pi \alpha )$ in $g(\alpha, \tau)$, we obtain $g$ restricted to $\Gamma$ and its first derivative in the forms
\begin{equation*}
g_\Gamma(\alpha) \DEF g( \alpha, \tau(\alpha) ) = 2 \sqrt{\frac{\tau(\alpha) - v}{1-2v}}, \qquad 
g_\Gamma^\prime( \alpha ) 
= \frac{2}{1-2v} \frac{\tau^\prime(\alpha)}{g_\Gamma(\alpha)}.
\end{equation*}
Finally, we observe that $g$ is decreasing along horizontal lines as $\alpha$ tends to $1/2$, since 
\begin{equation*}
\frac{\partial g}{\partial \alpha}(\alpha, \tau) = - 8 \pi \sqrt{\left( 1 - v \right) v \left( 1 - \tau \right) \tau} \, \frac{\sin(2\pi \alpha)}{g(\alpha,\tau)}, \qquad (\alpha, \tau) \in [0,1]^2, (\alpha, \tau) \neq (1/2,v)
\end{equation*}
and $g$ is decreasing along vertical lines in the strip $v \leq \tau \leq 1/2$ as $\tau$ tends to $v$, since
\begin{equation*}
\frac{\partial g}{\partial \tau}(\alpha, \tau) = 2 \, \frac{H( \cos( 2 \pi \alpha ) )}{g( \alpha, \tau )}, \qquad H(x) = 1 - 2 v + \sqrt{\frac{\left( 1 - v \right) v}{\left( 1 - \tau \right) \tau}} \left( 1 - 2 \tau \right) x.
\end{equation*}

\begin{figure}[ht]
\begin{center} 
\includegraphics[scale=.08]{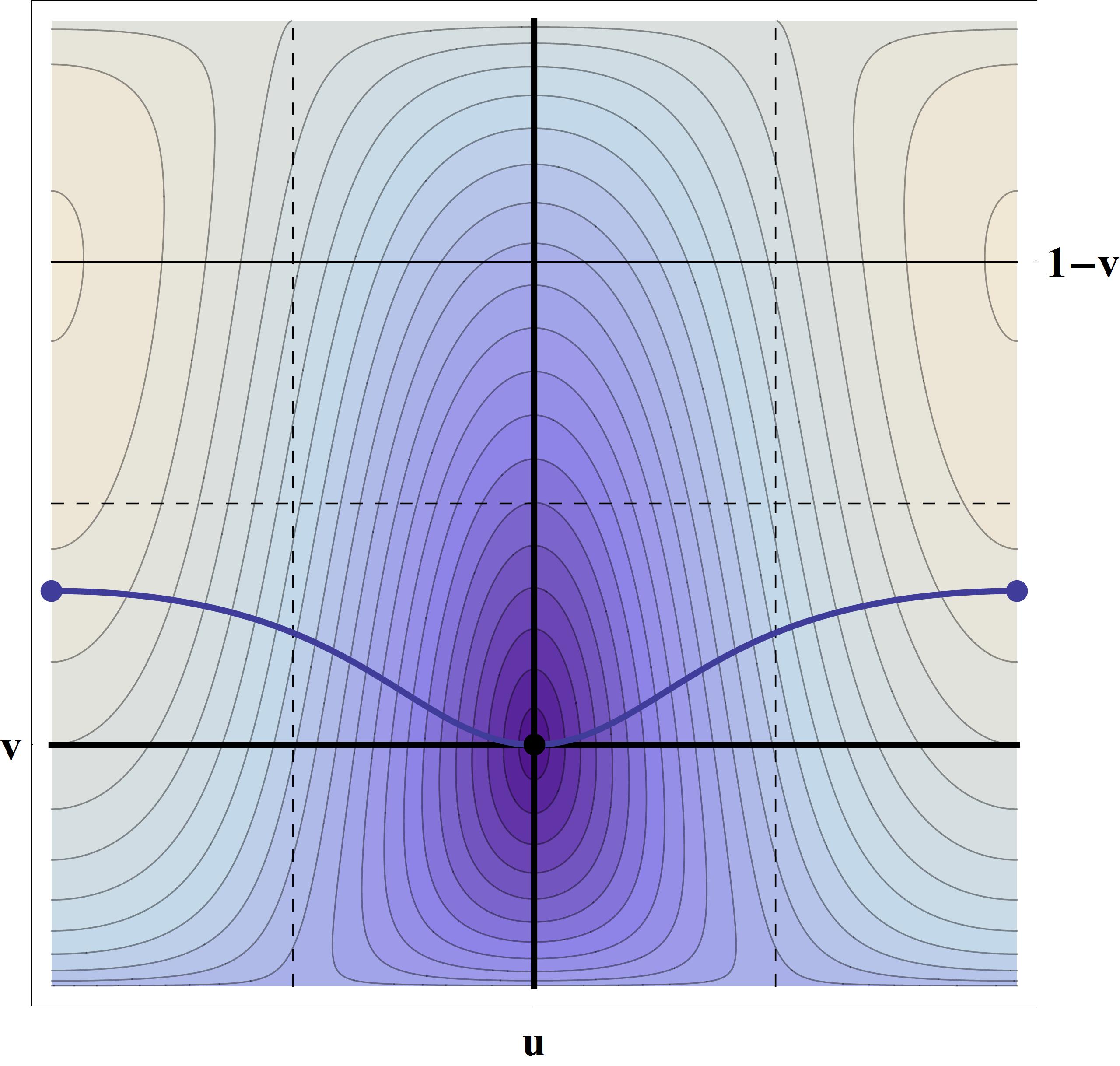}
\includegraphics[scale=.075]{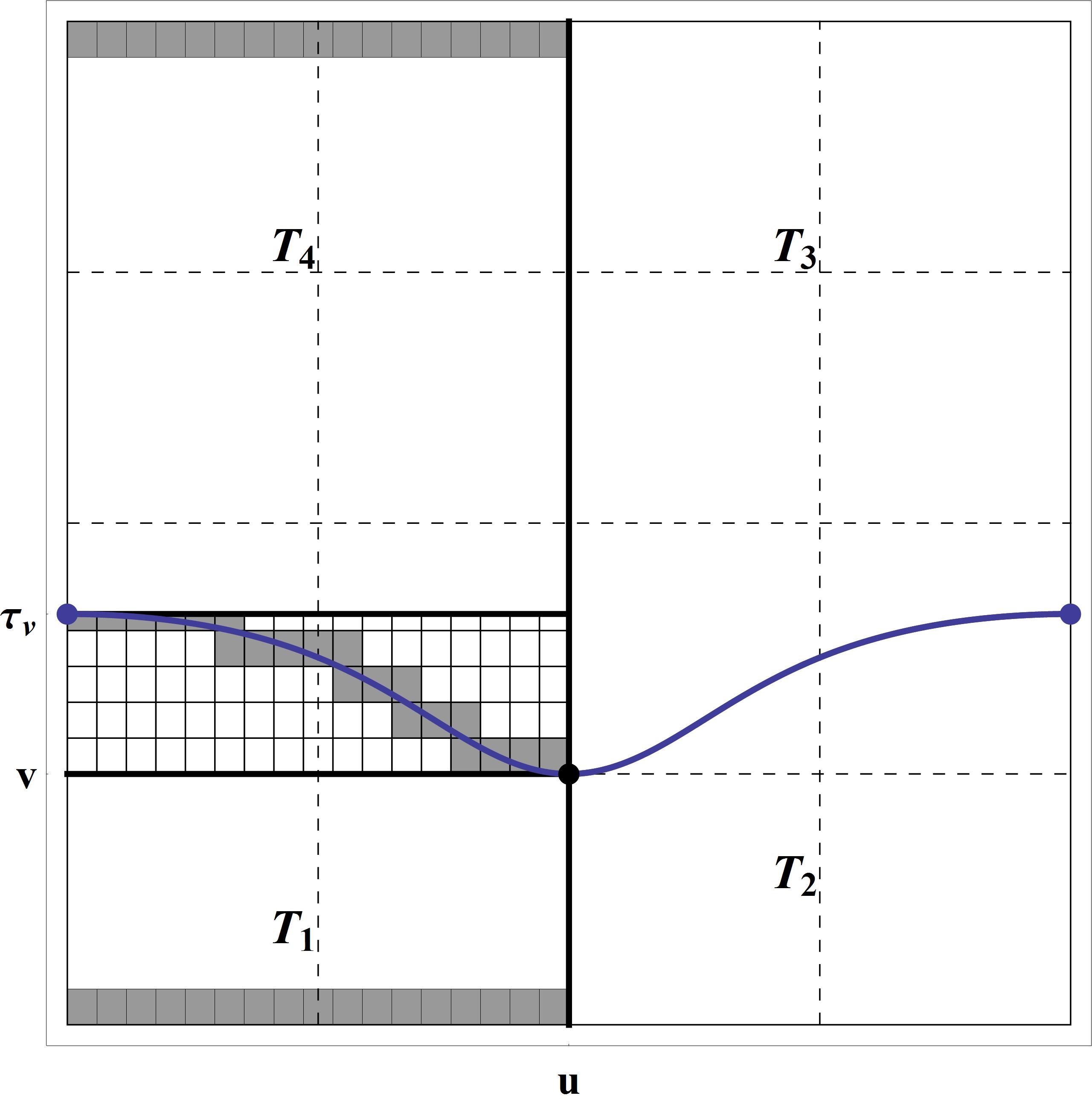}
\caption{\label{fig:fig.g} Contour plot of $g(\alpha,\tau)$ with curve $\Gamma$ and scheme to compute $V^{(1,2)}(g)$.}
\end{center}
\end{figure}

Similar as in the case $v = 1/2$, the curve $\Gamma$ and the vertical line $\alpha = 1/2$ divide the unit square into four regions $T_1, T_2, T_3, T_4$. In the interior of each region the mixed derivative has the same sign. By \eqref{eq:delta.relation}, the sign of $\delta(g; J)$ is the same for all subintervals contained in a fixed region $T_k$. Suppose $\mathcal{P}_{\boldsymbol{w}}$ is a partition of the unit square determined by a grid which is symmetric with respect to $\alpha = 1/2$ and also contains the vertical $\alpha = u ( = 1/2 )$ and the horizontals $\tau = v$ and $\tau = \tau_v$. (It suffices to consider this type of partitions.) The horizontals $\tau = v$ and $\tau = \tau_v$ subdivide the regions further into a rectangle $T_k^\prime$ without the curve $\Gamma$ and a part $T_k^{\prime\prime}$ with $\Gamma$ as part of the boundary. By symmetry (cf. Figure~\ref{fig:fig.g})
\begin{align*}
&V^{(1,2)}(g) = \sum_{J \in \mathcal{P}_{\boldsymbol{w}}} \left|\delta(g,J) \right| = 2 \left( g(0,\Delta\tau^\prime) - g(1/2,\Delta\tau^\prime) \right) - 2 \sideset{}{^\prime}{\sum}_{\substack{J \in \mathcal{P}_{\boldsymbol{w}}, \\ J \subseteq \overline{T_1^\prime}}} \delta(g,J) - 2 \sum_{\substack{J \in \mathcal{P}_{\boldsymbol{w}}, \\ J \subseteq \overline{T_1^{\prime\prime}}, \\ J \cap \Gamma = \emptyset}} \delta(g,J) \\
&\phantom{=\pm}+ 2 \sum_{\substack{J \in \mathcal{P}_{\boldsymbol{w}}, \\ J \subseteq \overline{T_1} \cup \overline{T_4}, \\ J \cap \Gamma \neq \emptyset}} \left| \delta(g,J) \right| + 2 \sum_{\substack{J \in \mathcal{P}_{\boldsymbol{w}}, \\ J \subseteq \overline{T_4^{\prime\prime}}, \\ J \cap \Gamma = \emptyset}} \delta(g,J) + 2 \sideset{}{^\prime}{\sum}_{\substack{J \in \mathcal{P}_{\boldsymbol{w}}, \\ J \subseteq \overline{T_4^\prime}}} \delta(g,J) + 2 \left( g(0,1- \Delta\tau^{\prime\prime}) - g(1/2,\Delta\tau^{\prime\prime}) \right),
\end{align*}
where $\Delta\tau^\prime$ and $\Delta\tau^{\prime\prime}$ denote the ``heights'' of the first and last row of subintervals of $\mathcal{P}_{\boldsymbol{w}}$ and in the sums $\sideset{}{^\prime}{\sum}$ one excludes the $J$'s from the first and last row. Using cancellation at ``interior'' nodes, we obtain
\begin{align*}
&V^{(1,2)}(g) 
= 2 \left( g(0,\Delta\tau^\prime) - g(1/2,\Delta\tau^\prime) \right) + 2 \left( g(0, v) - 0 - g(0, \Delta\tau^\prime) + g(1/2, \Delta\tau^\prime) \right) \\
&\phantom{=\pm}- 2 \sum_{\substack{J \in \mathcal{P}_{\boldsymbol{w}}, \\ J \subseteq \overline{T_1^{\prime\prime}}, \\ J \cap \Gamma = \emptyset}} \delta(g,J) + 2 \sum_{\substack{J \in \mathcal{P}_{\boldsymbol{w}}, \\ J \subseteq \overline{T_1} \cup \overline{T_4}, \\ J \cap \Gamma \neq \emptyset}} \left| \delta(g,J) \right| + 2 \sum_{\substack{J \in \mathcal{P}_{\boldsymbol{w}}, \\ J \subseteq \overline{T_4^{\prime\prime}}, \\ J \cap \Gamma = \emptyset}} \delta(g,J) + 2 \left( g(0,1- \Delta\tau^{\prime\prime}) - g(1/2,\Delta\tau^{\prime\prime}) \right) \\
&\phantom{=\pm}+ 2 \left( g(0, \tau_v) - g(1/2, \tau_v) + g(1/2, 1- \Delta\tau^{\prime\prime}) - g(0,1- \Delta\tau^{\prime\prime}) \right) \\
&\phantom{=}= 2 g(0, v) - 2 \sum_{\substack{J \in \mathcal{P}_{\boldsymbol{w}}, \\ J \subseteq \overline{T_1^{\prime\prime}}, \\ J \cap \Gamma = \emptyset}} \delta(g,J) + 2 \sum_{\substack{J \in \mathcal{P}_{\boldsymbol{w}}, \\ J \subseteq \overline{T_1} \cup \overline{T_4}, \\ J \cap \Gamma \neq \emptyset}} \left| \delta(g,J) \right| + 2 \sum_{\substack{J \in \mathcal{P}_{\boldsymbol{w}}, \\ J \subseteq \overline{T_4^{\prime\prime}}, \\ J \cap \Gamma = \emptyset}} \delta(g,J) + 2 \left( g(0, \tau_v) - g(1/2, \tau_v) \right).
\end{align*}
Let $v = y_m < y_{m+1} < \cdots < y_{m+K+1} = \tau_v$ denote the $\tau$-values of the grid defining $\mathcal{P}_{\boldsymbol{w}}$ in the strip $v \leq \tau \leq \tau_v$. To every $y_{m+k}$ with $0 \leq k \leq K-1$ there exists a maximal $x_{\ell_{m+k}}$ of the $\alpha$-values defining the grid such that the subinterval $[x_{\ell_{m+k-1}}, y_{m+k}) \times [x_{\ell_{m+k}}, y_{m+k+1})$ does not intersect $\Gamma$. For consistency reason we set $\ell_{m+K} = 0$, so that $x_{\ell_{m+K}} = 0$. Then
\begin{align*}
- \sum_{\substack{J \in \mathcal{P}_{\boldsymbol{w}}, \\ J \subseteq \overline{T_1^{\prime\prime}}, \\ J \cap \Gamma = \emptyset}} \delta(g,J) 
&= \sum_{k=0}^{K-1} \left( g(0,y_{m+k+1}) - g(x_{\ell_{m+k}},y_{m+k+1}) + g(x_{\ell_{m+k}},y_{m+k}) - g(0,y_{m+k}) \right) \\
&= g(0,y_{m+K}) - g(0,y_{m}) - \sum_{k=0}^{K-1} \left( g(x_{\ell_{m+k}},y_{m+k+1}) - g(x_{\ell_{m+k}},y_{m+k}) \right).
\end{align*}
Since $g$ is decreasing along horizontals in the strip $v \leq \tau \leq 1/2$ as $\tau$ goes to $v$, one gets
\begin{equation*}
- \sum_{\substack{J \in \mathcal{P}_{\boldsymbol{w}}, \\ J \subseteq \overline{T_1^{\prime\prime}}, \\ J \cap \Gamma = \emptyset}} \delta(g,J) \leq g(0,y_{m+K}) - g(0,y_{m}) \leq g(0,\tau_v) - g(0,v).
\end{equation*}
Similarly, to every $y_{m+k}$ with $1 \leq k \leq K$ there exists a minimal $x_{p_{m+k}}$ such that the subinterval $[x_{p_{m+k}}, y_{m+k}) \times [x_{p_{m+k+1}}, y_{m+k+1})$ does not intersect $\Gamma$. For consistency reason we set $p_{m}$ to that index with $x_{p_{m}} = 1/2$. Then
\begin{align*}
\sum_{\substack{J \in \mathcal{P}_{\boldsymbol{w}}, \\ J \subseteq \overline{T_4^{\prime\prime}}, \\ J \cap \Gamma = \emptyset}} \delta(g,J) 
&= \sum_{k=1}^K \left( g(x_{p_{m+k}}, y_{m+k}) - g(1/2, y_{m+k}) + g(1/2, y_{m+k+1}) - g(x_{p_{m+k}}, y_{m+k+1}) \right) \\
&= g(1/2, y_{m+K+1}) - g(1/2, y_{m+1}) - \sum_{k=1}^K \left( g(x_{p_{m+k}}, y_{m+k+1}) - g(x_{p_{m+k}}, y_{m+k}) \right) \\
&\leq g(1/2, y_{m+K+1}) - g(1/2, y_{m+1}) \leq g(1/2, y_{m+K+1}) = g(1/2, \tau_v).
\end{align*}
As an intermediate result we have
\begin{align*}
V^{(1,2)}(g) 
&\leq 2 g(0, v) + 2 \left( g(0,\tau_v) - g(0,v) \right) + 2 \sum_{\substack{J \in \mathcal{P}_{\boldsymbol{w}}, \\ J \subseteq \overline{T_1} \cup \overline{T_4}, \\ J \cap \Gamma \neq \emptyset}} \left| \delta(g,J) \right| + 2 g(1/2, \tau_v) \\
&\phantom{=}+ 2 \left( g(0, \tau_v) - g(1/2, \tau_v) \right) = 4 g(0,\tau_v) + 2 \sum_{\substack{J \in \mathcal{P}_{\boldsymbol{w}}, \\ J \subseteq \overline{T_1} \cup \overline{T_4}, \\ J \cap \Gamma \neq \emptyset}} \left| \delta(g,J) \right|.
\end{align*}
Using triangle inequality and monotonicity along horizontals we get 
\begin{align*}
&\sum_{\substack{J \in \mathcal{P}_{\boldsymbol{w}}, \\ J \subseteq \overline{T_1} \cup \overline{T_4}, \\ J \cap \Gamma \neq \emptyset}} \left| \delta(g,J) \right| 
= \sum_{k=0}^{K} \sum_{j=\ell_{m+k}}^{p_{m+k}-1} \left| g(x_j,y_{m+k}) - g(x_{j+1},y_{m+k}) + g(x_{j+1},y_{m+k+1}) - g(x_{j},y_{m+k+1}) \right| \\
&\phantom{=}\leq \sum_{k=0}^{K} \sum_{j=\ell_{m+k}}^{p_{m+k}-1} \left( g(x_j,y_{m+k}) - g(x_{j+1},y_{m+k}) \right) + \sum_{k=0}^{K} \sum_{j=\ell_{m+k}}^{p_{m+k}-1} \left( g(x_{j},y_{m+k+1}) - g(x_{j+1},y_{m+k+1}) \right) \\
&\phantom{=}= \sum_{k=0}^{K} \left( g(x_{\ell_{m+k}},y_{m+k}) - g(x_{p_{m+k}},y_{m+k}) \right) + \sum_{k=0}^{K} \left( g(x_{\ell_{m+k}},y_{m+k+1}) - g(x_{p_{m+k}},y_{m+k+1}) \right) \\
&\phantom{=}\leq 2 \sum_{k=0}^{K} \left( g_\Gamma(x_{\ell_{m+k}}) - g_\Gamma(x_{p_{m+k}}) \right).
\end{align*}
In the last step we used that the left side of a horizontal segment of successive rectangles of the covering of $\Gamma$ is below and the right side is above the curve $\Gamma$ by construction, cf. Figure~\ref{fig:fig.g}. Recall, that $g_\Gamma$ is strictly monotonically decreasing as $\alpha$ tends to $1/2$. Since $g_\Gamma(x_{\ell_{m+k-1}}) > g_\Gamma(x_{p_{m+k}})$ for $0 \leq k \leq K - 1$ by monotonicity of $g_\Gamma$, we have
\begin{align*}
&\sum_{\substack{J \in \mathcal{P}_{\boldsymbol{w}}, \\ J \subseteq \overline{T_1} \cup \overline{T_4}, \\ J \cap \Gamma \neq \emptyset}} \left| \delta(g,J) \right|
\leq 2 \Big\{ \left( g_\Gamma(x_{\ell_{m}}) - g_\Gamma(x_{p_{m}}) \right) + \sum_{k=1}^{K} \left( g_\Gamma(x_{\ell_{m+k}}) - g_\Gamma(x_{\ell_{m+k-1}}) \right) \\
&\phantom{=}+ \sum_{k=1}^{K} \left( g_\Gamma(x_{\ell_{m+k-1}}) - g_\Gamma(x_{p_{m+k}}) \right) \Big\} = 2 \Big\{ g_\Gamma(0) - g_\Gamma(1/2) + \sum_{k=1}^{K} \left( g_\Gamma(x_{\ell_{m+k-1}}) - g_\Gamma(x_{p_{m+k}}) \right) \Big\}.
\end{align*}
By construction $x_{\ell_{m+K-1}} > x_{p_{m+K}} > x_{\ell_{m+K-2}} > x_{p_{m+K-1}} > \cdots > x_{\ell_{m+k}} > x_{p_{m}} = 1/2$. So, because of monotonicity of $g_\Gamma$, we increase the right-hand side of the estimate above when including terms terms for the gaps $[x_{p_{m+k}},x_{\ell_{m+k-2}}]$. Thus
\begin{equation*}
\sum_{\substack{J \in \mathcal{P}_{\boldsymbol{w}}, \\ J \subseteq \overline{T_1} \cup \overline{T_4}, \\ J \cap \Gamma \neq \emptyset}} \left| \delta(g,J) \right| \leq 4 \left( g_\Gamma(0) - g_\Gamma(1/2) \right) = 4 g_\Gamma(0),
\end{equation*}
where the expression in parentheses is the total variation of (monotone) one-dimensional function $g_\Gamma$ restricted to $[0,1/2]$. Putting everything together, we obtain
\begin{equation*}
V^{(1,2)}(g) \leq 4 g(0,\tau_v) + 2 \sum_{\substack{J \in \mathcal{P}_{\boldsymbol{w}}, \\ J \subseteq \overline{T_1} \cup \overline{T_4}, \\ J \cap \Gamma \neq \emptyset}} \left| \delta(g,J) \right| \leq 4 g(0,\tau_v) + 8 g_\Gamma(0) = 12 g_\Gamma(0) = 24 \sqrt{ \frac{\tau_v - v}{1-2v} } 
\end{equation*}
It can be shown that the right-most side above as function of $v$ is strictly monotonically increasing on $[0,1/2)$ with
\begin{equation*}
\lim_{v \to 1/2^-} \sqrt{ \frac{\tau_v - v}{1-2v} } = \frac{1}{\sqrt{3}}.
\end{equation*}
We can use the Koksma-Hlawka inequality to obtain
\begin{equation*}
|\Delta(\bsw)| \le D^\ast(P_N; [0,1]^2, \mathcal{R}^\ast) V(g) \leq D^\ast(P_N; [0,1]^2, \mathcal{R}^\ast) \left( \frac{24}{\sqrt{3}} + 2 \sqrt{2} \right) \quad \text{for $0 < v < 1/2$.}
\end{equation*}
This concludes the proof.
\end{proof}

Digital nets \cite{DiPi2010, Ni1992} are explicit constructions of point sets in $[0,1)^2$ with $D^\ast(P_N; [0,1]^2, \mathcal{R}^\ast) = \mathcal{O}(N^{-1} \log N)$. By mapping them to the unit sphere $\mathbb{S}^2$ one also obtains explicit constructions of points on $\mathbb{S}^2$ with favorable discrepancy. We have the following corollary.

\begin{corollary}
Let $b\ge 2$ and $m \ge 1$ be integers and let $N= b^m$. Let $\{ Z_N \}$ be a sequence of $N$-point configurations on $\mathbb{S}^2$ defined by digital $(0,m,2)$-nets $P_N \in [0,1]^2$ lifted to the sphere $\mathbb{S}^2$ by means of $Z_N = \Phi(P_N)$. Then the equal weight quadrature rules $Q_N$ associated with $Z_N$ satisfy
\begin{equation*}
e^2(Q_N,\mathcal{H}^{3/2})  = \mathcal{O}(N^{-1} \log N) \qquad \text{as $N \to \infty$.}
\end{equation*}
\end{corollary}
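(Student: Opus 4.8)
The plan is to obtain this as an immediate consequence of Theorem~\ref{thm:upper.bound} together with the classical star-discrepancy estimate for digital $(0,m,2)$-nets recalled in the preceding sections. All of the analytic difficulty has already been discharged in the proof of Theorem~\ref{thm:upper.bound}, so what remains is essentially bookkeeping.

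First I would apply Theorem~\ref{thm:upper.bound} directly. Since here $P_N$ is a digital $(0,m,2)$-net in base $b$ (hence a point set in $[0,1)^2$) and $Z_N = \Phi(P_N)$, the theorem yields
\begin{equation*}
e^2(Q_N,\HH^{3/2}) \le \left( \frac{24}{\sqrt{3}} + 2\sqrt{2} \right) D^\ast(P_N; [0,1)^2, \mathcal{R}^\ast).
\end{equation*}
This reduces the problem to controlling the star-discrepancy of the underlying net in the unit square, a completely standard quantity.

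Next I would insert the bound for $(0,m,2)$-nets quoted earlier in the paper, namely $D^\ast(P_N; [0,1)^2, \mathcal{R}^\ast) \le C_b\, m / b^{m-1}$, where $C_b$ depends only on $b$. Writing $N = b^m$, so that $m = \log_b N = \log N / \log b$ and $b^{m-1} = N/b$, gives
\begin{equation*}
\frac{m}{b^{m-1}} = \frac{b\,m}{N} = \frac{b}{\log b}\cdot\frac{\log N}{N},
\end{equation*}
which is $\mathcal{O}(N^{-1}\log N)$ with an implied constant depending only on $b$. Combining this with the displayed inequality above, and absorbing the fixed factor $\frac{24}{\sqrt{3}} + 2\sqrt{2}$ and $C_b$ into a single constant independent of $N$, produces $e^2(Q_N,\HH^{3/2}) = \mathcal{O}(N^{-1}\log N)$, as claimed.

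I do not expect a genuine obstacle here: the substance of the argument is entirely contained in the variation computation of Theorem~\ref{thm:upper.bound}. The only points requiring a little care are the conversion from the base-$b$ digit length $m$ to $\log N$ (which only alters the constant by a factor $1/\log b$), and noting that digital $(0,m,2)$-nets exist for every $m$---for instance the Sobol$'$ matrices exhibited earlier---so that the sequence $\{Z_N\}$ indexed by $N = b^m$ is genuinely realizable.
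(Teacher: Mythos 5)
Your proposal is correct and matches the paper's (implicit) argument exactly: the paper states the corollary immediately after noting that digital nets achieve $D^\ast(P_N; [0,1]^2, \mathcal{R}^\ast) = \mathcal{O}(N^{-1}\log N)$, so the intended proof is precisely the combination of Theorem~\ref{thm:upper.bound} with this discrepancy bound, as you carry out. The conversion $m/b^{m-1} = (b/\log b)\,(\log N)/N$ is also correct, so there is nothing to add.
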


\subsection{Numerical results}
\label{subsec:numerical.results}

We consider now the worst case error \eqref{eq_wce_32} for quadrature rules defined by digital
nets based on a Sobol' sequence, see \cite{So1967}. For efficient
implementations of Sobol' sequences see
\cite{AnSa1979,BrFo1988,JoKu2003}. Figure~\ref{fig1} shows the
squared worst-case error $e^2(Q_N, \mathcal{H}^{3/2})$.
The numerical results suggest that the squared worst case error $e^2(Q_N, \mathcal{H}^{3/2})$ converges with order $\mathcal{O}(N^{-3/2})$ as $N \to \infty$.

Note that the first point of the Sobol' sequence (or any digital
$(0,2)$-sequence for that matter) is always $(0,0)$. Since
$\Phi(0,0) = (1,0,0)$, this point gets mapped to the North Pole. On
the other hand, no point of the digital sequence gets mapped to the
South Pole. This might not be a desirable feature. To remedy this
situation one can randomize the $(0,2)$-sequence using a scrambling
algorithm, see \cite{Ma1998,Ow2003}. In this case the sequence in
$[0,1]^2$ is still a $(0,2)$-sequence, but the point $(0,0)$ only
occurs with probability $0$. Numerical investigation of scrambled
Sobol' sequences yield similar results as the one shown in
Figure~\ref{fig1}.

%

\begin{figure}[ht]
\begin{center}
\includegraphics[scale=0.55]{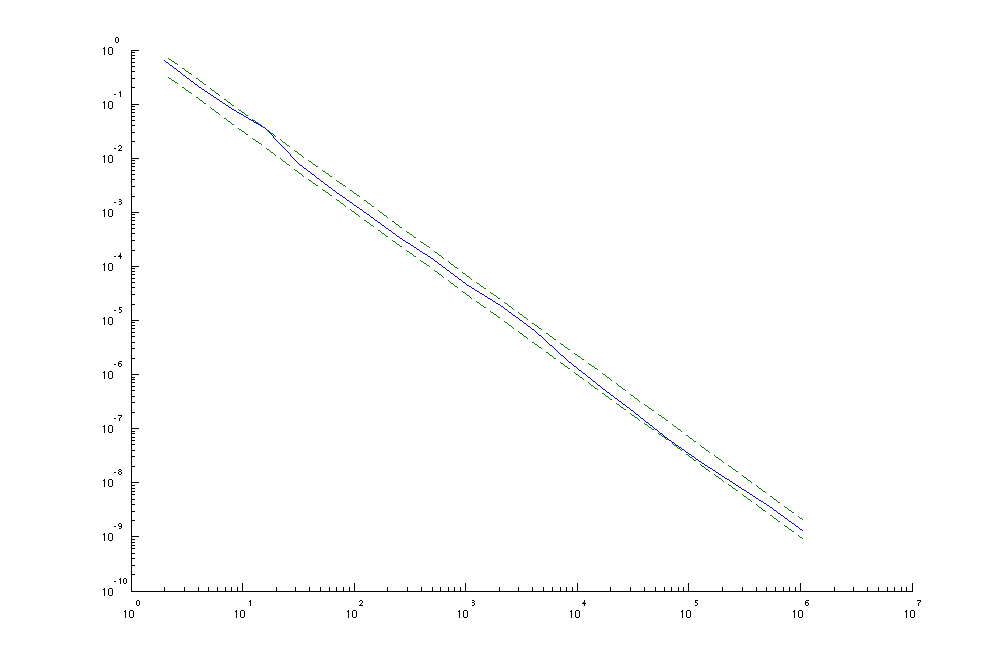}
\caption{\label{fig1} The dashed lines show $N^{-3/2}$ and $(9/4) N^{-3/2}$, and the curve shows the squared worst-case error $e^2(Q_N,\mathcal{H}^{3/2})$, where the quadrature points are a digital net mapped to the sphere.}
\end{center}
\end{figure}

\begin{table}[ht]
\begin{center}
\begin{tabular}{l|l|l|l|l}
$m$ & $N = 2^m$ & $e^2(Q_N,\mathcal{H}^{3/2})$ & $N^{-3/2}$ & $N^{3/2} e^2(Q_N,\mathcal{H}^{3/2})$  \\ \hline
1 & 2  &  6.2622e-01 & 3.5355e-01 & 1.7712 \\
2 & 4  &  2.1149e-01 & 1.2500e-01 & 1.6920 \\
3 & 8  &  8.1448e-02 & 4.4194e-02 & 1.8430 \\
4 & 16  &  3.5091e-02 & 1.5625e-02 & 2.2459 \\
5 & 32  &  8.0526e-03 & 5.5242e-03 & 1.4577 \\ \hline
6 & 64  &  2.6309e-03 & 1.9531e-03 & 1.3470 \\
7 & 128  &  9.4336e-04 & 6.9053e-04 & 1.3661 \\
8 & 256  &  3.4501e-04 & 2.4414e-04 & 1.4132 \\
9 & 512  &  1.3374e-04 & 8.6316e-05 & 1.5495 \\
10 & 1024  &  4.6029e-05 & 3.0517e-05 & 1.5083 \\ \hline
11 & 2048 &  1.8846e-05 & 1.0789e-05 & 1.7468 \\
12 & 4096 &  6.4670e-06 & 3.8146e-06 & 1.6953 \\
13 & 8192 & 1.7873e-06 & 1.3486e-06 & 1.3252 \\
14 & 16384 & 5.6815e-07 & 4.7683e-07 & 1.1915 \\
15 & 32768 & 1.9912e-07 & 1.6858e-07 & 1.1811 \\ \hline 
16 & 65536  & 6.3194e-08 & 5.9604e-08 & 1.0602 \\
17 &  131072 & 2.4122e-08 & 2.1073e-08 & 1.1447 \\
18 &262144  & 9.1906e-09 & 7.4505e-09 & 1.2335 \\ 
19 & 524288  & 3.7001e-09 & 2.6341e-09 & 1.4047 \\
20 & 1048576  & 1.3068e-09 & 9.3132e-10 & 1.4032 \\ \hline
\end{tabular}
\end{center}
\caption{Numerical results: The worst-case error obtained when using digital nets over $\mathbb{Z}_2$ lifted to the sphere $\mathbb{S}^2$. }
\label{table1}
\end{table}


The numerical results lead us to the following conjecture.

\begin{conjecture}
Let $\bsx_0,\bsx_1,\ldots \in [0,1)^2$ be a $(0,2)$-sequence and let $\bsz_0,\bsz_1,\ldots \in \mathbb{S}^2$ be the corresponding points on the sphere obtained by using the mapping $\Phi$. Let $Q_{N}(f) = \frac{1}{N} \sum_{n=0}^{N-1} f(\bsz_n)$. Then we have
\begin{equation*}
e^2(Q_N, \mathcal{H}^{3/2}) = \mathcal{O}(N^{-3/2}) \qquad \text{as $N \to \infty$.}
\end{equation*}
\end{conjecture}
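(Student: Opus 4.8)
The plan is to replace the Koksma--Hlawka estimate of Theorem~\ref{thm:upper.bound}, which only sees the star-discrepancy $D^\ast(P_N;[0,1)^2,\mathcal{R}^\ast)$ and therefore cannot be pushed below $\landau(N^{-1}\log N)$, by a harmonic decomposition of the squared worst-case error. Writing the kernel in the Legendre form $K(\bsz,\bsz^\prime)=\sum_{\ell\ge0}\lambda_\ell\,\widehat P_\ell(\langle\bsz,\bsz^\prime\rangle)$ with $\lambda_\ell\asymp\ell^{-3}$, and using that constants are integrated exactly while $\int_{\mathbb{S}^2}\widehat P_\ell(\langle\cdot,\bsz^\prime\rangle)\rd\sigma_2=0$ for $\ell\ge1$, one obtains the exact expansion
\begin{equation*}
e^2(Q_N,\HH^{3/2})=\sum_{\ell=1}^\infty\lambda_\ell\,T_\ell,\qquad T_\ell\DEF\frac{1}{N^2}\sum_{n,n^\prime=0}^{N-1}\widehat P_\ell(\langle\bsz_n,\bsz_{n^\prime}\rangle)=\frac{1}{2\ell+1}\sum_{m=1}^{2\ell+1}\left|W_{\ell,m}\right|^2,
\end{equation*}
where $W_{\ell,m}\DEF\frac1N\sum_{n=0}^{N-1}Y_{\ell,m}(\bsz_n)$ is a Weyl sum in an $L_2(\sigma_2)$-orthonormal basis. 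Since $\Phi$ is area preserving and $\int_{\mathbb{S}^2}Y_{\ell,m}\rd\sigma_2=0$, each $W_{\ell,m}$ is precisely the quasi-Monte Carlo integration error, over the $(0,2)$-sequence $P_N$ in $[0,1)^2$, of the pulled-back harmonic $g_{\ell,m}\DEF Y_{\ell,m}\circ\Phi$, which factors as $g_{\ell,m}(x_1,x_2)=B_m(x_1)\,A_{\ell,m}(x_2)$ with $B_m$ equal to $\cos(2\pi m x_1)$ or $\sin(2\pi m x_1)$ and $A_{\ell,m}$ an associated Legendre function of $1-2x_2$.

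I would then split the sum at the resolution threshold $L\asymp\sqrt N$. In the high-frequency range $\ell>L$ the harmonic cannot be resolved by $N$ points and one can only hope for Monte-Carlo quality, $T_\ell=\landau(1/N)$; granting this, the tail is $\sum_{\ell>L}\lambda_\ell\,\landau(1/N)\asymp N^{-1}\sum_{\ell>L}\ell^{-3}\asymp N^{-1}L^{-2}=\landau(N^{-2})$, comfortably within budget. In the low-frequency range $\ell\le L$ I would bound the Weyl sums by exploiting the product structure through the Walsh system, which (unlike the Fourier system) is adapted to the digital shifts present in a digital net: the factorisation gives $\widehat g_{\ell,m,\wal}(\bsk)=\widehat B_{m,\wal}(k_1)\,\widehat A_{\ell,m,\wal}(k_2)$, and for a digital $(0,m,2)$-net the error $W_{\ell,m}$ is the sum of these coefficients over the dual net. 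Geometrically this is the statement that the net splits the sphere into latitudinal bands, on each of which the longitudes form a digitally shifted equally spaced set, so that the longitudinal average of $B_m$ nearly cancels and the residual is governed by the variation of the latitudinal factor $A_{\ell,m}$ across a band. The quantitative input needed here is a sharp bound on the Walsh coefficients of $B_m$ (via Fourier--Walsh transfer) and of $A_{\ell,m}$ (via size and derivative bounds for associated Legendre functions, which are only H\"older-$\tfrac12$ at the poles when $m$ is odd), with explicit and controlled dependence on $m$ and $\ell$.

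The main obstacle, and the reason the rate $N^{-3/2}$ is subtle rather than automatic, is that the diagonal of $T_\ell$ alone already contributes $\sum_{\ell\ge1}\lambda_\ell\cdot(1/N)=\landau(1/N)$, which is far larger than the target. Thus $N^{-3/2}$ can only emerge from the near-cancellation of this diagonal against the oscillating off-diagonal pair-correlation sum $\tfrac1{N^2}\sum_{n\ne n^\prime}\widehat P_\ell(\langle\bsz_n,\bsz_{n^\prime}\rangle)$, weighted by $\lambda_\ell$ and summed over $\ell$; equivalently, it is nothing but the second-order statement $\mathcal I-\tfrac1{N^2}\sum_{n,n^\prime}\|\bsz_n-\bsz_{n^\prime}\|=\landau(N^{-3/2})$ in disguise. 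I expect the hard part to be precisely this high-frequency pair-correlation estimate $T_\ell=\landau(1/N)$ for $\ell\ge L$ together with the behaviour in the transition regime $\ell\asymp\sqrt N$, where the dominant contribution to $e^2$ lives and where Beck's lower bound $e^2\ge c\,N^{-3/2}$ shows that no further cancellation can be extracted. Controlling the pair correlations of the lifted net uniformly in this regime is exactly what the first-order (discrepancy) methods used elsewhere in the paper cannot deliver, and it is where the essential new idea would be required.
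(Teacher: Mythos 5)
You should first note that the paper does not prove this statement at all: it is stated as a \emph{conjecture}, supported only by the numerical evidence in Figure~\ref{fig1} and Table~\ref{table1}, and the only rigorous upper bound the paper obtains is $e^2(Q_N,\HH^{3/2}) = \landau(N^{-1}\log N)$ via Theorem~\ref{thm:upper.bound}. So there is no proof in the paper to compare against, and the real question is whether your proposal closes the gap between $N^{-1}\log N$ and $N^{-3/2}$. It does not. Your harmonic decomposition $e^2(Q_N,\HH^{3/2})=\sum_{\ell\ge1}\lambda_\ell T_\ell$ with $T_\ell=\frac{1}{2\ell+1}\sum_m|W_{\ell,m}|^2\ge 0$ is correct, and your diagnosis of where the difficulty sits is accurate, but the two estimates that would actually deliver the rate are both asserted rather than proved. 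First, the high-frequency bound $T_\ell=\landau(1/N)$ for $\ell>L\asymp\sqrt N$ is introduced with ``granting this''; since the diagonal of $T_\ell$ alone is exactly $1/N$, this is equivalent to a uniform pair-correlation bound $\bigl|\frac{1}{N^2}\sum_{n\ne n'}\widehat P_\ell(\langle\bsz_n,\bsz_{n'}\rangle)\bigr|\le C/N$ for all $\ell$, and as you yourself observe, the target $N^{-3/2}$ requires this off-diagonal sum to cancel the diagonal $\mathcal I/N$ to within $\landau(N^{-3/2})$ after weighting by $\lambda_\ell$ and summing. Nothing in the proposal produces that cancellation; it is restated, not established.

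Second, the low-frequency part is equally incomplete. The claimed Walsh-coefficient bounds for $B_m$ and for the associated Legendre factors $A_{\ell,m}(1-2x_2)$ ``with explicit and controlled dependence on $m$ and $\ell$'' are exactly the hard technical content, and they are not supplied. The functions $A_{\ell,m}$ contain factors of the form $(x_2-x_2^2)^{m/2}$, are only H{\"o}lder-$\tfrac12$ at the endpoints, and have sup-norms and derivative bounds growing with $\ell$; without a worked-out decay estimate for their Walsh coefficients, summed over the dual net and then over $\ell\le L$ and $|m|\le\ell$, one cannot verify that this block stays within the $N^{-3/2}$ budget rather than reproducing the $N^{-1}$ barrier. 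In short, your proposal is a sensible research programme that correctly identifies the transition regime $\ell\asymp\sqrt N$ as the crux (consistent with Beck's lower bound $e^2\ge cN^{-3/2}$), but every step that goes beyond what Theorem~\ref{thm:upper.bound} already gives is left open, so the conjecture remains unproved.
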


In other words, a $(0,2)$-sequence lifted to the $2$-sphere via the mapping $\Phi$ achieves the optimal rate of convergence of the worst-case integration error in $\mathcal{H}^{3/2}$. By Stolarsky's invariance principle, the conjecture also implies that a $(0,2)$-sequence lifted to the $2$-sphere via the mapping $\Phi$ achieves the optimal rate of decay of the spherical cap $L_2$-discrepancy.

\vspace{10mm}
{\bf Acknowledgement:} The first author is grateful to the School of Mathematics and Statistics at UNSW for their support.

\bibliographystyle{abbrv}
\bibliography{bibliography}

\def\cprime{$'$} \def\cprime{$'$} \def\cprime{$'$}
\begin{thebibliography}{10}

\bibitem{AnBlGo1999}
V.~V. Andrievskii, H.-P. Blatt, and M.~G{\"o}tz.
\newblock Discrepancy estimates on the sphere.
\newblock {\em Monatsh. Math.}, 128(3):179--188, 1999.

\bibitem{AnSa1979}
I.~A. Antonov and V.~M. Saleev.
\newblock An effective method for the computation of {$\lambda {\rm P}_{\tau
  }$}-sequences.
\newblock {\em Zh. Vychisl. Mat. i Mat. Fiz.}, 19(1):243--245, 271, 1979.

\bibitem{BaBr2009}
B.~Ballinger, G.~Blekherman, H.~Cohn, N.~Giansiracusa, E.~Kelly, and
  A.~Sch{\"u}rmann.
\newblock Experimental study of energy-minimizing point configurations on
  spheres.
\newblock {\em Experiment. Math.}, 18(3):257--283, 2009.

\bibitem{Be1984}
J.~Beck.
\newblock Sums of distances between points on a sphere---an application of the
  theory of irregularities of distribution to discrete geometry.
\newblock {\em Mathematika}, 31(1):33--41, 1984.

\bibitem{BeCa2009}
E.~Bendito, A.~Carmona, A.~M. Encinas, J.~M. Gesto, A.~G{\'o}mez,
  C.~Mouri{\~n}o, and M.~T. S{\'a}nchez.
\newblock Computational cost of the {F}ekete problem. {I}. {T}he forces method
  on the 2-sphere.
\newblock {\em J. Comput. Phys.}, 228(9):3288--3306, 2009.

\bibitem{BeClDu2004}
M.~K. Berkenbusch, I.~Claus, C.~Dunn, L.~P. Kadanoff, M.~Nicewicz, and S.~C.
  Venkataramani.
\newblock Discrete charges on a two dimensional conductor.
\newblock {\em J. Statist. Phys.}, 116(5-6):1301--1358, 2004.

\bibitem{BoHaSa2008}
S.~V. Borodachov, D.~P. Hardin, and E.~B. Saff.
\newblock Asymptotics for discrete weighted minimal {R}iesz energy problems on
  rectifiable sets.
\newblock {\em Trans. Amer. Math. Soc.}, 360(3):1559--1580 (electronic), 2008.

\bibitem{BrFo1988}
P.~Bratley and B.~L. Fox.
\newblock Algorithm 659: Implementing sobol$'$s quasirandom sequence generator.
\newblock {\em ACM Trans. Math. Softw.}, 14:88--100, 1988.

\bibitem{Br2008}
J.~S. Brauchart.
\newblock Optimal logarithmic energy points on the unit sphere.
\newblock {\em Math. Comp.}, 77(263):1599--1613, 2008.

\bibitem{BrDi2011_pre}
J.~S. Brauchart and J.~Dick.
\newblock {A simple Proof of Stolarsky's Invariance Principle}.
\newblock arXiv:1101.4448v1 [math.NA]; submitted.

\bibitem{BrWo20xx}
J.~S. Brauchart and R.~S. Womersley.
\newblock Numerical integration over the unit sphere,
  $\mathcal{L}_2$-discrepancy and sum of distances.
\newblock manuscript, 29 pages.

\bibitem{CoKu2007}
H.~Cohn and A.~Kumar.
\newblock Universally optimal distribution of points on spheres.
\newblock {\em J. Amer. Math. Soc.}, 20(1):99--148 (electronic), 2007.

\bibitem{CuFr1997}
J.~Cui and W.~Freeden.
\newblock Equidistribution on the sphere.
\newblock {\em SIAM J. Sci. Comput.}, 18(2):595--609, 1997.

\bibitem{DiKr2006}
J.~Dick and P.~Kritzer.
\newblock A best possible upper bound on the star discrepancy of
  {$(t,m,2)$}-nets.
\newblock {\em Monte Carlo Methods Appl.}, 12(1):1--17, 2006.

\bibitem{DiPi2010}
J.~Dick and F.~Pillichshammer.
\newblock {\em {Digital Nets and Sequences. Discrepancy Theory and Quasi-Monte
  Carlo Integration}}.
\newblock Cambridge University Press, Cambridge, 2010.

\bibitem{ErTu1948I}
P.~Erd{\"o}s and P.~Tur{\'a}n.
\newblock On a problem in the theory of uniform distribution. {I}.
\newblock {\em Nederl. Akad. Wetensch., Proc.}, 51:1146--1154 = Indagationes
  Math. 10, 370--378 (1948), 1948.

\bibitem{ErTu1948II}
P.~Erd{\"o}s and P.~Tur{\'a}n.
\newblock On a problem in the theory of uniform distribution. {II}.
\newblock {\em Nederl. Akad. Wetensch., Proc.}, 51:1262--1269 = Indagationes
  Math. 10, 406--413 (1948), 1948.

\bibitem{Fa1982}
H.~Faure.
\newblock Discr\'epance de suites associ\'ees \`a un syst\`eme de num\'eration
  (en dimension {$s$}).
\newblock {\em Acta Arith.}, 41(4):337--351, 1982.

\bibitem{Gr1991}
P.~J. Grabner.
\newblock Erd{\H o}s-{T}ur\'an type discrepancy bounds.
\newblock {\em Monatsh. Math.}, 111(2):127--135, 1991.

\bibitem{HaSa2004}
D.~P. Hardin and E.~B. Saff.
\newblock Discretizing manifolds via minimum energy points.
\newblock {\em Notices Amer. Math. Soc.}, 51(10):1186--1194, 2004.

\bibitem{HaSa2005}
D.~P. Hardin and E.~B. Saff.
\newblock Minimal {R}iesz energy point configurations for rectifiable
  {$d$}-dimensional manifolds.
\newblock {\em Adv. Math.}, 193(1):174--204, 2005.

\bibitem{HeSl2005b}
K.~Hesse and I.~H. Sloan.
\newblock Optimal lower bounds for cubature error on the sphere {$S^2$}.
\newblock {\em J. Complexity}, 21(6):790--803, 2005.

\bibitem{JoKu2003}
S.~Joe and F.~Y. Kuo.
\newblock Remark on algorithm 659: Implementing sobol$'$s quasir- andom
  sequence generator.
\newblock {\em ACM Trans. Math. Softw.}, 29:49--57, 2003.

\bibitem{LeV1965}
W.~J. LeVeque.
\newblock An inequality connected with {W}eyl's criterion for uniform
  distribution.
\newblock In {\em Proc. {S}ympos. {P}ure {M}ath., {V}ol. {VIII}}, pages 22--30.
  Amer. Math. Soc., Providence, R.I., 1965.

\bibitem{LiVa1999}
X.-J. Li and J.~D. Vaaler.
\newblock Some trigonometric extremal functions and the {E}rd{\H o}s-{T}ur\'an
  type inequalities.
\newblock {\em Indiana Univ. Math. J.}, 48(1):183--236, 1999.

\bibitem{LuPhSa1986}
A.~Lubotzky, R.~Phillips, and P.~Sarnak.
\newblock Hecke operators and distributing points on the sphere. {I}.
\newblock {\em Comm. Pure Appl. Math.}, 39(S, suppl.):S149--S186, 1986.
\newblock Frontiers of the mathematical sciences: 1985 (New York, 1985).

\bibitem{Ma1998}
J.~Matou{\v{s}}ek.
\newblock On the {$L_2$}-discrepancy for anchored boxes.
\newblock {\em J. Complexity}, 14(4):527--556, 1998.

\bibitem{Mu1966}
C.~M{\"u}ller.
\newblock {\em Spherical harmonics}, volume~17 of {\em Lecture Notes in
  Mathematics}.
\newblock Springer-Verlag, Berlin, 1966.

\bibitem{NaSuWa2010}
F.~J. Narcowich, X.~Sun, J.~D. Ward, and Z.~Wu.
\newblock Leveque type inequalities and discrepancy estimates for minimal
  energy configurations on spheres.
\newblock {\em J. Approx. Theory}, 162(6):1256--1278, 2010.

\bibitem{Ni1988}
H.~Niederreiter.
\newblock Low-discrepancy and low-dispersion sequences.
\newblock {\em J. Number Theory}, 30(1):51--70, 1988.

\bibitem{Ni1992}
H.~Niederreiter.
\newblock {\em Random number generation and quasi-{M}onte {C}arlo methods},
  volume~63 of {\em CBMS-NSF Regional Conference Series in Applied
  Mathematics}.
\newblock Society for Industrial and Applied Mathematics (SIAM), Philadelphia,
  PA, 1992.

\bibitem{NiXi1996}
H.~Niederreiter and C.~Xing.
\newblock Low-discrepancy sequences and global function fields with many
  rational places.
\newblock {\em Finite Fields Appl.}, 2(3):241--273, 1996.

\bibitem{NiXi1995}
H.~Niederreiter and C.~P. Xing.
\newblock Low-discrepancy sequences obtained from algebraic function fields
  over finite fields.
\newblock {\em Acta Arith.}, 72(3):281--298, 1995.

\bibitem{Ow2003}
A.~B. Owen.
\newblock Variance with alternative scramblings of digital nets.
\newblock {\em ACM Trans. Modeling Comp. Simulation}, 12:363--378, 2003.

\bibitem{Ro1954}
K.~F. Roth.
\newblock On irregularities of distribution.
\newblock {\em Mathematika}, 1:73--79, 1954.

\bibitem{Sch1969}
W.~M. Schmidt.
\newblock Irregularities of distribution. {IV}.
\newblock {\em Invent. Math.}, 7:55--82, 1969.

\bibitem{Sj1972}
P.~Sj{\"o}gren.
\newblock Estimates of mass distributions from their potentials and energies.
\newblock {\em Ark. Mat.}, 10:59--77, 1972.

\bibitem{SlWo2004}
I.~H. Sloan and R.~S. Womersley.
\newblock Extremal systems of points and numerical integration on the sphere.
\newblock {\em Adv. Comput. Math.}, 21(1-2):107--125, 2004.

\bibitem{So1967}
I.~M. Sobol{\cprime}.
\newblock Distribution of points in a cube and approximate evaluation of
  integrals.
\newblock {\em \u Z. Vy\v cisl. Mat. i Mat. Fiz.}, 7:784--802, 1967.

\bibitem{St1973}
K.~B. Stolarsky.
\newblock Sums of distances between points on a sphere. {II}.
\newblock {\em Proc. Amer. Math. Soc.}, 41:575--582, 1973.

\bibitem{SuCh2008}
X.~Sun and Z.~Chen.
\newblock Spherical basis functions and uniform distribution of points on
  spheres.
\newblock {\em J. Approx. Theory}, 151(2):186--207, 2008.

\bibitem{Wa1990}
G.~Wagner.
\newblock On means of distances on the surface of a sphere (lower bounds).
\newblock {\em Pacific J. Math.}, 144(2):389--398, 1990.

\bibitem{Wa1992b}
G.~Wagner.
\newblock Erd{\H o}s-{T}ur\'an inequalities for distance functions on spheres.
\newblock {\em Michigan Math. J.}, 39(1):17--34, 1992.

\bibitem{Wa1992}
G.~Wagner.
\newblock On means of distances on the surface of a sphere. {II}. {U}pper
  bounds.
\newblock {\em Pacific J. Math.}, 154(2):381--396, 1992.

\bibitem{XiNi1995}
C.~P. Xing and H.~Niederreiter.
\newblock A construction of low-discrepancy sequences using global function
  fields.
\newblock {\em Acta Arith.}, 73(1):87--102, 1995.

\end{thebibliography}

\end{document}